\newtheorem{theorem}{Theorem}
\newtheorem{prop}[theorem]{Proposition}
\newtheorem{lemma}[theorem]{Lemma}
\newtheorem{corol}[theorem]{Corollary}
\theoremstyle{definition}
\theoremstyle{remark}
\newtheorem{remark}[theorem]{Remark}
\newcommand{\p}{\mathbb{P}}
\newcommand{\e}{\mathbb{E}}
\newcommand{\reals}{\mathbb{R}}
\newcommand{\ind}{\mathbf{1}}
\newcommand{\me}{\mathrm{e}}
\newcommand{\md}{\mathrm{d}}
\newcommand{\Occ}{\mathcal{O}^{X}}
\newcommand{\drift}{c}
\def\beq{\begin{eqnarray}} \def\eeq{\end{eqnarray}}
\def\al*#1{\begin{align*}#1\end{align*}}
\def\ga*#1{\begin{gather*}#1\end{gather*}}
\def\alat*#1#2{\begin{alignat*}{#1}#2\end{alignat*}}
\def\bea{\begin{eqnarray*}}
\def\eea{\end{eqnarray*}}
\def\ml*#1{\begin{multline*}#1\end{multline*}}
\begin{document}
\title[]{Poissonian occupation times of spectrally negative Lévy processes with applications}
%{Parisian ruin with Erlang distributed implementation delay : A hybrid approach }
%\author[]{}
%\address{D\'epartement de math\'ematiques, Universit\'e du Qu\'ebec \`a Montr\'eal (UQAM), 201 av.\ Pr\'esident-Kennedy, Montr\'eal (Qu\'ebec) H2X 3Y7, Canada}
%\email{guerin.helene@uqam.ca}
\author[]{Mohamed Amine Lkabous}
\address{Department of Statistics and Actuarial Science, University of Waterloo, Waterloo, ON, N2L 3G1, Canada}
\email{mohamed.amine.lkabous@uwaterloo.ca}
\date{\today }
\begin{abstract}
In this paper, we introduce the concept of \emph{Poissonian occupation times} below level $0$ of spectrally negative Lévy processes. In this case, occupation time is accumulated only when the process is observed to be negative at arrival epochs of an independent Poisson process. Our results extend some well known continuously observed quantities involving occupation times of spectrally negative Lévy processes. As an application, we establish a link between Poissonian occupation times and insurance risk models with Parisian implementation delays.
\end{abstract}

\keywords{occupation times, Parisian ruin, Lévy insurance risk processes, scale functions}
\maketitle

\section{Introduction}
Let $X=\left(X_t\right)_{t\geq 0}$ be a spectrally negative Lévy process. We define the first passage time above $b$,
\begin{align*}
\tau_b^+ = \inf\{t>0 \colon X_t > b\},
\end{align*}
with the convention $\inf \emptyset=\infty$.
Furthermore, let $T_i$ be the the arrival times of an independent Poisson process with intensity $\lambda>0$ and independent of $X$. 
%We define the following stopping timeX
%\begin{align}\label{T}
%T_{0}^- &= \min\{T_{i}>0 \colon X_{T_{i}}<0, \textit{ }i\in \mathbb{N} \}, 
%\end{align}
In this paper, we define the Poissonian occupation time below $0$ over a finite-time horizon, by 
\begin{equation}
\Occ_{t,\lambda}= \sum_{n\in \mathbb{N}}(\tau^{+}_0 \circ \theta_{T_n})\ind_{\left\lbrace X_{T_n}<0,T_n< t \right\rbrace},
\end{equation}
%and 
%\begin{equation}
%\A_{t,\lambda}= \sum_{n\geq 1}(\tau^{-}_0 \circ \theta_{T_n})\ind_{\left\lbrace X_{T_n}>0,T_n< t \right\rbrace},
%\end{equation}
where $\theta $ is the Markov shift operator ($X_{s}\circ \theta_{t}=X_{s+t} 
$). More precisely, occupation time is accumulated once the process is observed to be negative at Poisson arrival times (see Figure~\ref{fig2}). We will first study the joint Laplace transform of $\left(\tau_{b}^{+},\Occ_{\tau_{b}^{+},\lambda }\right)$ and, as a consequence, we examine the Laplace transform as well as the distribution of Poissonian occupation time over an infinite-time horizon, that is, 
$$\Occ_{\infty,\lambda}= \sum_{n\in \mathbb{N}}(\tau^{+}_0 \circ \theta_{T_n})\ind_{\left\lbrace X_{T_n}<0 \right\rbrace}.
$$
%$$
%\Occ_{t} = \int_{0}^{t} \mathbf{1}_{\left(-\infty,0\right)}\left(X_s \right) \mathrm{d}s,
%$$
%and its finite-time horizon counterpart, $$
%\Occ_{\infty} = \int_{0}^{\infty} \mathbf{1}_{\left(-\infty,0\right)}\left(X_s \right) \mathrm{d}s.
%$$
The analysis of quantities involving the duration of negative surplus (also called the \emph{time in red}) under continuous monitoring $ \Occ_{t} = \int_{0}^{t} \mathbf{1}_{\left(-\infty,0\right)}\left(X_s \right) \mathrm{d}s$ has been well studied in the literature. The Laplace transform of $\Occ_{\infty} = \int_{0}^{\infty} \mathbf{1}_{\left(-\infty,0\right)}\left(X_s \right) \mathrm{d}s$ was first derived by Landriault et al. \cite{landriaultetal2011}. Loeffen et al. \cite{loeffenetal2014} derived the joint Laplace transform of $\left(\tau^{+}_b,\Occ_{\tau^{+}_b}\right)$, generalizing the results in \cite{landriaultetal2011} and \cite{loeffenetal2014}. For a more general treatment, Li and Palmowski \cite{li-palmowski2017} studied weighted occupation times.  %generalizing the results in \cite{landriaultetal2011}  \cite{loeffenetal2014}.
%For a refracted Lévy process, Kyprianou et al. \cite{kyprianouetal2014} derived the Laplace transform of $\Occ_{\tau^{+}_b}$ and $\Occ_{\infty}$, among many others, and the results are expressed in terms of scale functions and the Lévy measure. Later on, those results were simplified by Renaud \cite{renaud2014}. Li and Zhou \cite{li-zhou2018} studied the Laplace transforms of the weighted occupation times for the refracted Lévy process which generalize the results in \cite{kyprianouetal2014} and \cite{renaud2014}.
%Guérin and Renaud \cite{guerin_renaud_2015} obtained an expression of the potential measure discounted by its occupation time of the negative half-line. 
Recently, Landriault et al. \cite{lietal2019} obtained an analytical expression for the distribution of the occupation time below level $0$ up to an (independent) exponential horizon for spectrally negative L\'{e}vy risk processes and refracted spectrally negative L\'{e}vy risk processes. %The same quantity was derived by Li et al. \cite{lietal2015} using a Poisson observation approach.
%The analysis of quantities involving the duration of negative surplus has been considered for many processes as in the compound Poisson case \cite{dosreis1993} and the jump-diffusion process case \cite{zhangwu2002}. Under the spectrally negative Lévy framework, Landriault et al. \cite{landriaultetal2011} derived an expression for Laplace transform of the occupation time below $0$ while Loeffen et al. \cite{loeffenetal2014} derived the Laplace transform occupation time of an arbitrary interval until first passage times. Recently, Landriault et al. \cite{lietal2019} studied the distribution of occupation times up to an exponential random variable for spectrally negative Lévy processes.

In the aforementioned works, the study of continuously observed identities involving occupation times became challenging when surplus process has paths of unbounded variation. In \cite{landriaultetal2011}, the \textit{$\epsilon$-approximation} approach, which consists of a spatial shift of the sample paths of the underlying process, was used to avoid the problem caused by the infinity activity of the process and such that classical conditioning continue to hold. A second approach consists of introducing a sequence of bounded variation processes, $(X_n)_{n\geq 1}$, which converge to the unbounded variation process $X$ as $n$ goes to infinity (see Loeffen et al. \cite{loeffenetal2014}, Guérin and Renaud \cite{guerin_renaud_2015}). An important contribution of the Poisson observation approach is the unified proof for processes with bounded or unbounded variation paths, whereas these two cases need to be treated separately using the approximation approaches explained above and, when the Poisson observation rate goes to infinity, we recover results for occupation times under continuous monitoring (see e.g. Albrecher et al. \cite{albrecheretal2016} and Li et al. \cite{lietal2015}).
\subsection{Motivation}
In actuarial mathematics, occupation times can naturally be used as a measure of the risk inherent to an insurance portfolio. For instance, the time spent below a solvency threshold helps quantify the risk related to an insurance surplus process. The analysis of the duration of the negative surplus is also related to Parisian ruin models in which insurers are granted a period of time to re-emerge above the threshold level before ruin is deemed to
occur (see e.g. \cites{loeffenetal2013,loeffenetal2017}) and (\cites{landriaultetal2011,landriaultetal2014}). Two types of Parisian ruin are strongly related to $\Occ_t$: cumulative Parisian ruin and Parisian ruin with exponential delays. In the first case, ruin occurs when the surplus process stays cumulatively below a critical level longer than a pre-determined time period. Guérin and Renaud \cite{guerinrenaud2015} studied the probability of cumulative Parisian ruin for the Cramér–Lundberg risk model with exponential claims by giving an explicit representation for the distribution of $\Occ_t$. A general treatment for Lévy risk processes has been recently studied by Landriault et al. \cite{lietal2019}.
 %the Parisian ruin probability was first computed by Dassios and Wu \cite{dassioswu2009} but only for the Brownian motion risk process. In a more general setup, Loeffen et al. \cite{loeffenetal2013} generalized these results for spectrally negative Lévy processes.
On the other hand, the probability of Parisian ruin with exponential delays was first studied in \cite{landriaultetal2011} through the following connection between the occupation time $\Occ_{\infty}$ and this type of Parisian ruin, for $x\in \reals$, 
\begin{equation}\label{link}
\p_{x}\left( \rho_\lambda<\infty\right)=1-\e_{x}\left[ \me^{-\lambda\Occ_{\infty}} \right], 
\end{equation}
where
\begin{equation}\label{kappalam}
\rho_\lambda=\inf \left\{ t>0\text{ | }t-g_{t}>\me^{g_{t}}_{\lambda} \right\},
\end{equation}
and $g_t = \sup \left\lbrace 0 \leq s \leq t \colon X_s \geq 0\right\rbrace$, while $\me^{g_{t}}_{\lambda}$ is an exponentially distributed random variable with rate $\lambda>0$ and independent of $X$ (see also Baurdoux et al. \cite{baurdoux_et_al_2015}). More precisely, a copy of $\me_\lambda$ is assigned to each excursion below level $0$ of the process. Also, it is known that Parisian ruin time in \eqref{kappalam} corresponds to the first passage time when $X$ is observed below $0$ at Poisson arrival times, that is
 \begin{align}\label{T}
T_{0}^- &= \min\{T_{i} \colon X_{T_{i}}<0, \textit{ }i\in \mathbb{N}\}, 
\end{align}
where $T_{i}$ are the arrival times of an independent Poisson process of rate $\lambda>0$.

This paper is partly motivated by the study of Parisian ruin with Erlang implementation delays. Such type of Parisian ruin has already been studied for Lévy insurance risk models (see, e.g. Albrecher and Ivanovs \cite{albrecher-ivanovs2017}, Landriault et al. \cite{landriaultetal2014} and Frostig and Keren-Pinhasik\cite{Frostig2019}). However, in contrast to Parisian ruin in \eqref{kappalam}, the relationship in Equation \eqref{link} does not hold any more when the delay follows an Erlang-distributed implementation delays. In this paper, we establish a link between Poissonian occupation time and Parisian ruin with Erlang-distributed implementation delays (see Section \ref{sectionexamp}). First, we study Parisian ruin time with implementation delays given by a sum of two independent exponential random variables with different rates. Thus, we extend the study to Parisian ruin with Erlang$(2,\lambda)$ implementation delays. We will also derive several fluctuation identities as well as the Gerber-Shiu distribution. For a more general case, namely Parisian ruin with Erlang$(n,\lambda)$ implementation delays, a recursive expression will be derived using the $n^{th}$-Poissonian occupation time.

% Finally, we establish a recursive equation for the probability of Parisian ruin with Erlang$(n,\lambda)$ implementation delays.\\
%when the risk process is positive, it is monitored discretely at Poisson arrival times until a negative surplus is observed. Then, the process is observed continuously and occupation time is accumulated until the moment the surplus process recovers to level $0$.

The rest of the paper is organized as follows. In Section \ref{section1}, we present the necessary background material on spectrally negative L\'evy processes and scale functions. The main results, as well as the proofs, are presented in Section \ref{section3}. Applications to Parisian insurance risk models are presented in section \ref{sectionexamp}. In the Appendix, a few well known fluctuation identities with delays are presented.
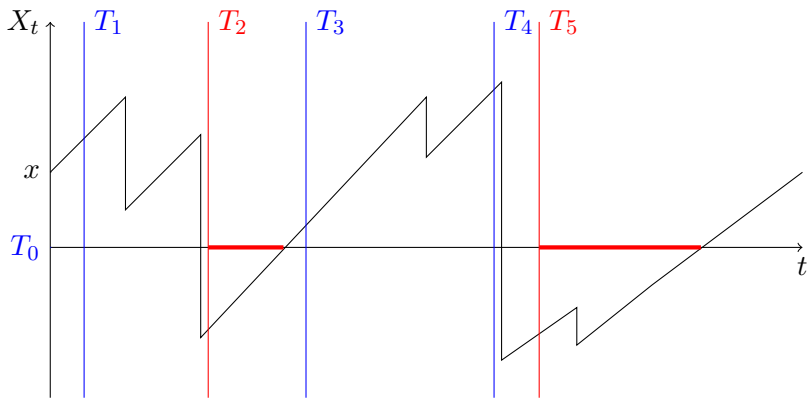
\begin{figure}[h!]
\begin{center}
\begin{tikzpicture}
  \draw[ thin,color=gray] (-0.1,-1.1)  (3.9,3.9);
\draw[->] (0,0) -- (10,0) node[below] {$t$};
 \draw[-] (0,1) -- (0,1) node[left] {$x$};
 %\draw[-][color=red] (0,1.8) -- (0,1.8) node[left] {$a$};
%\draw[-][color=red] (0,1.8) -- (10,1.8) ;
\draw[->] (0,-2) -- (0,3) node[left] {$X_t$};
\draw[color=black] (0,1) -- (1,2) --(1,0.5)--(2,1.5)--(2,-1.2)--(5,2)--(5,1.2)--(6,2.2)--(6,-1.5)--(7,-0.8)--(7,-1.3)--(8,-0.5)--(10,1);
   \draw[-][blue](0,0) -- (0,0) node[left] {$T_0$};
  \draw[-] [blue](0.45,-2) -- (0.45,3) node[right] {$T_{1}$};
\draw[-] [blue](3.4,-2) -- (3.4,3) node[right] {$T_3$};
   \draw[-] [red](2.1,-2) -- (2.1,3) node[right] {$T_2$};
    \draw[-] [ultra thick, red](2.1,0) -- (3.1,0) node[right] {};
  % \path[-]  [red](2.8,0) edge [bend left] node[above] {$r$} (4.8,0);
      \draw[-] [blue](5.9,-2) -- (5.9,3) node[right] {$T_4$}; 
       \draw[-] [red](6.5,-2) -- (6.5,3) node[right] {$T_5$}; 
\draw[-] [ultra thick, red](6.5,0) -- (8.65,0) node[right] {};
\end{tikzpicture}
\caption{Illustration of Poissonian occupation time below $0$.}\label{fig2}
\end{center}
\end{figure}
\section{Preliminaries}\label{section1}
In this section, we present the necessary
background material on spectrally negative Lévy processes. %For more details, readers are referred to Bertoin \cite{bertoin1996} and Kyprianou \cite{kyprianou2014}.
\subsection{L\'{e}vy insurance risk processes}
 A L\'{e}vy insurance risk process $X$ is a process with stationary and independent increments and no positive jumps. To avoid trivialities, we exclude the case where $X$ has monotone paths. As the L\'{e}vy process $X$ has no positive jumps, its Laplace transform
exists: for all $\lambda, t \geq 0$, 
\begin{equation*}
\mathbb{E} \left[ \mathrm{e}^{\lambda X_t} \right] = \mathrm{e}^{t
\psi(\lambda)} ,
\end{equation*}
where 
\begin{equation*}
\psi(\lambda) = \gamma \lambda + \frac{1}{2} \sigma^2 \lambda^2 +
\int^{\infty}_0 \left( \mathrm{e}^{-\lambda z} - 1 + \lambda z \mathbf{1}%
_{(0,1]}(z) \right) \Pi(\mathrm{d}z) ,
\end{equation*}
for $\gamma \in \mathbb{R}$ and $\sigma \geq 0$, and where $\Pi$ is a $\sigma
$-finite measure on $(0,\infty)$ called the L\'{e}vy measure of $X$ such
that 	
\begin{equation*}
\int^{\infty}_0 (1 \wedge z^2) \Pi(\mathrm{d}z) < \infty .
\end{equation*}
We will use the standard Markovian notation: the law of $X$ when starting from $%
X_0 = x$ is denoted by $\mathbb{P}_x$ and the corresponding expectation by $%
\mathbb{E}_x$. We write $\mathbb{P}$ and $\mathbb{E}$ when $x=0$.\\
We recall the definition of standard first-passage time below level $b \in \mathbb{R}$,
\begin{align*}
\tau_b^- &= \inf\{t>0 \colon X_t<b\}.
\end{align*}
with the convention $\inf \emptyset=\infty$.
\subsection{Scale functions}

We now present the definition of the scale functions $W_{q}$ and $Z_{q}$ of $%
X$. First, recall that there exists a function $\Phi \colon [0,\infty) \to
[0,\infty)$ defined by $\Phi_{q} = \sup \{ \lambda \geq 0 \mid \psi(\lambda)
= q\}$ (the right-inverse of $\psi$) such that 
\begin{equation*}
\psi ( \Phi_{q} ) = q, \quad q \geq 0 .
\end{equation*}%
%
%We have that $\Phi(q)=0$ if and only if $q=0$ and $\psi'(0+)\geq0$.
Now, for $q \geq 0$, the $q$-scale function of the process $X$ is defined as
the continuous function on $[0,\infty)$ with Laplace transform 
\begin{equation}  \label{def_scale}
\int_0^{\infty} \mathrm{e}^{- \lambda y} W_{q} (y) \mathrm{d}y = \frac{1}{%
\psi_q(\lambda)} , \quad \text{for $\lambda > \Phi_q$,}
\end{equation}
where $\psi_q(\lambda)=\psi(\lambda) - q$.  This function is unique,
positive and strictly increasing for $x\geq0$ and is further continuous for $%
q\geq0$. We extend $W_{q}$ to the whole real line by setting $W_{q}(x)=0$
for $x<0$. We write $W = W_{0}$ when $q=0$.\newline
We also define another scale function $Z_{q}(x,\theta )$ by 
\begin{equation}
Z_{q}(x,\theta )=\mathrm{e}^{\theta x}\left( 1-\psi_q (\theta ) \int_{0}^{x}%
\mathrm{e}^{-\theta y}W_{q}(y)\mathrm{d}y\right) ,\quad x\geq 0,
\end{equation}%
and $Z_{q}(x,\theta )=\mathrm{e}^{\theta x}$ for $x<0$. For $\theta=0$, we
get 
\begin{equation}  \label{eq:zqscale}
Z_{q}(x,0)=Z_{q}(x) = 1 + q \int_0^x W_{q}(y)\mathrm{d }y, \quad x \in 
\mathbb{R}.
\end{equation}
Using \eqref{def_scale}, we can also re-write the scale function $%
Z_{q}(x,\theta )$ as follows 
\begin{equation}  \label{Zv2}
Z_{q}(x,\theta )=\psi_q (\theta ) \int_{0}^{\infty}\mathrm{e}%
^{-\theta y}W_{q}(x+y)\mathrm{d}y ,\quad x\geq 0,\mathit{\ }\theta\geq
\Phi_q.
\end{equation}
It is known that
\begin{equation}\label{CM1}
\lim_{b \rightarrow \infty} \dfrac{W_{q}(x+b)}{W^{q}(b)} = \me^{\Phi_q x}.
\end{equation}
We also recall the \emph{second generation} scale function, that is, for $p,p+q\geq 0$ and $x \in \reals$, we have
\begin{align}\label{eq:convsnlp}
\mathcal{W}_{a}^{\left( p,q\right) }\left( x\right) &= W_{p}\left( x\right) +q\int_{a}^{x}W_{p+q}\left( x-y\right)W_{p}\left( y\right) \md y \nonumber\\ &= W_{p+q}\left( x\right)-q\int_{0}^{a}W_{p+q}\left( x-y\right) W_{p}\left( y\right) \md y,
\end{align}
where the second equation follows using the following identity obtained in \cite{loeffenetal2014} 
\begin{equation}\label{conveq}
(s-p) \int_0^x W_p(x-y) W_s(y) \mathrm{d}y = W_s(x) - W_p(x).
\end{equation}
We also have a slightly general case taken from \cite{MR3257360}: for $s\geq 0$, $x>a$, we have
\begin{equation}\label{convolution2}
(s-(p+q)) \int_a^x W_{s}(x-y)\mathcal{W}_{a}^{\left( p,q\right) }\left(y\right) \mathrm{d}y \\
=\mathcal{W}_{a}^{\left(p,s-p\right) }\left(x\right)-\mathcal{W}_{a}^{\left( p,q\right) }\left(x\right).
\end{equation}
For later use, note that we can show 
\begin{equation}\label{LapscriptW}
\int_{0}^{\infty} \me^{-\theta z} \mathcal{W}_{a}^{\left( p,s\right)} \left(a+z\right) \mathrm{d}z = \frac{Z_{p}\left( a,\theta\right)}{ \psi_{p+s}(\theta)}, \quad \theta >\Phi_{p+s} .
\end{equation}

For the sake of compactness of the results, we will use the following function defined in \cite{albrecher-ivanovs2017}: for $x\in \reals$ and $\alpha,\beta \geq 0$, we have
\begin{equation}\label{eq:ztilde}
\tilde{Z_q}\left(x,\alpha,\beta  \right)
=\dfrac{\psi_q(\alpha)Z_q(x,\beta)-\psi_q(\beta)Z_q(x,\alpha)}{\alpha-\beta},
\end{equation}
and when $\alpha=\beta$, we obtain 
\begin{equation}\label{eq:ztilde2}
\tilde{Z}_q\left(x,\alpha,\alpha  \right)
=\psi_q^{\prime}(\alpha)Z_q(x,\alpha)-\psi_q(\alpha)Z_q^{\prime}(x,\alpha),
\end{equation}
where $Z^{\prime}_q$ is the derivative of $Z_q$ taken with respect to the second argument. We write $\tilde{Z} = \tilde{Z}_{0}$ when $q=0$.\\
Finally, we recall Kendall's identity that provides the distribution of the first upward crossing of a specific level (see \cite[Corollary VII.3]{bertoin1996}): on $%
(0,\infty) \times (0,\infty)$, we have 
\begin{equation}  \label{eq:Kendall}
r \mathbb{P}(\tau_z^+ \in \mathrm{d}r) \mathrm{d}z = z \mathbb{P}(X_r \in 
\mathrm{d}z) \mathrm{d}r .
\end{equation}

We refer the reader to \cite{kyprianou2014} for more details on spectrally negative Lévy processes. More examples and numerical computations related to scale functions can be found in e.g. \cite{kuznetsovetal2012} and \cite{surya2008}.
\section{Main results}\label{section3}
We are now ready to state our main results. First, we provide an explicit expression for the joint Laplace transform of $\left(\tau_{b}^{+},\Occ_{\tau_{b}^{+},\lambda }\right)$.
\begin{theorem}\label{maintheotwosidedth}
For $p,q\geq 0$, $\lambda>0$ and $ x \leq b$,
\begin{equation}\label{mainrestwosided}
\mathbb{E}_{x}\left[ \mathrm{e}^{-q\tau_{b}^{+}-p\Occ_{\tau_{b}^{+},\lambda }}
\ind_{\left\lbrace \tau_{b}^{+}<\infty \right\rbrace}\right]=\dfrac{\tilde{Z}_q\left(x,\Phi_{\lambda+q},\Phi_{p+q}  \right)}{\tilde{Z}_q\left(b,\Phi_{\lambda+q},\Phi_{p+q}  \right)}.
\end{equation}
%\begin{equation}\label{mainrestwosided2}
%\mathbb{E}_{x}\left[ \mathrm{e}^{-q\tau_{a}^{-}-p\A_{\tau_{a}^{-},\lambda }}
%\ind_{\left\lbrace \tau_{a}^{-}<\infty %\right\rbrace}\right]=...
%\end{equation}
\end{theorem}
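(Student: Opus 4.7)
The plan is to reduce this two-sided identity to a one-sided computation, in the spirit of the standard exit decomposition for spectrally negative Lévy processes. Write $u(x) := \e_x[\me^{-q\tau_b^+ - p\Occ_{\tau_b^+ ,\lambda}} \ind_{\{\tau_b^+ < \infty\}}]$ and $M(y) := \e_y[\me^{-q\tau_0^+ - p\Occ_{\tau_0^+ ,\lambda}}]$. Because $X$ has no positive jumps, $X_{\tau_0^+}=0$ almost surely on $\{X_0 < 0\}$. Combining this with the strong Markov property at $\tau_0^+$ and the independence and memorylessness of the Poisson arrivals past $\tau_0^+$ yields the factorization
\begin{equation*}
u(y) = M(y)\, u(0), \qquad y < 0.
\end{equation*}

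The crux is an explicit closed form for $M$: the goal is to show that for $y < 0$,
\begin{equation*}
M(y) = \frac{\lambda \me^{\Phi_{p+q} y} - p \me^{\Phi_{\lambda+q} y}}{\lambda - p}.
\end{equation*}
I would derive this by conditioning on $\tau_0^+ = L$ and identifying the conditional Laplace transform of the Poissonian occupation accrued on $[0,\tau_0^+]$ with the survival function $\p(\mathrm{Exp}(\lambda) + \mathrm{Exp}(p) > L) = (p\me^{-\lambda L}-\lambda\me^{-p L})/(p-\lambda)$ of a sum of two independent exponentials with rates $\lambda$ and $p$; taking expectation over $\tau_0^+$ via the classical identity $\e_y[\me^{-s\tau_0^+}] = \me^{\Phi_s y}$ then gives the displayed closed form.

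For $x \in [0,b]$, I would split at $\tau_0^- \wedge \tau_b^+$ and apply the strong Markov property: no occupation accrues on $\{\tau_b^+ < \tau_0^-\}$, which contributes $W_q(x)/W_q(b)$ by the classical one-sided exit identity, while on $\{\tau_0^- < \tau_b^+\}$ the factorization above gives $u(0)\,\e_x[\me^{-q\tau_0^-}M(X_{\tau_0^-})\ind_{\{\tau_0^-<\tau_b^+\}}]$. Applying the Gerber--Shiu identity
\begin{equation*}
\e_x[\me^{-q\tau_0^-}\me^{\theta X_{\tau_0^-}}\ind_{\{\tau_0^-<\tau_b^+\}}] = Z_q(x,\theta) - \frac{W_q(x)}{W_q(b)}Z_q(b,\theta), \qquad \theta \geq \Phi_q,
\end{equation*}
at $\theta = \Phi_{p+q}$ and $\theta = \Phi_{\lambda+q}$ (both admissible since each exceeds $\Phi_q$) and combining linearly via the two-exponential form of $M$ re-assembles the expectation into
\begin{equation*}
\frac{\Phi_{\lambda+q}-\Phi_{p+q}}{\lambda-p}\left[\tilde{Z}_q(x,\Phi_{\lambda+q},\Phi_{p+q}) - \frac{W_q(x)}{W_q(b)}\tilde{Z}_q(b,\Phi_{\lambda+q},\Phi_{p+q})\right].
\end{equation*}
Setting $x=0$ and using the algebraic identity $\tilde{Z}_q(0,\Phi_{\lambda+q},\Phi_{p+q}) = (\lambda-p)/(\Phi_{\lambda+q}-\Phi_{p+q})$ solves linearly for $u(0) = \tilde{Z}_q(0,\Phi_{\lambda+q},\Phi_{p+q})/\tilde{Z}_q(b,\Phi_{\lambda+q},\Phi_{p+q})$; back-substitution telescopes the $W_q(x)/W_q(b)$ terms to leave the claimed ratio for all $x \in [0,b]$, and the case $y < 0$ follows from the factorization together with the convention $Z_q(y,\theta) = \me^{\theta y}$ below zero.

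The hard part is the closed-form computation of $M$: identifying the excursion-wise Laplace transform of the Poissonian occupation with a two-exponential survival function is the only genuinely nontrivial step. Once this is in place, the remaining algebra is routine Gerber--Shiu bookkeeping, exploiting the linearity of $\tilde{Z}_q$ in the pair $\{Z_q(\cdot,\Phi_{p+q}),\,Z_q(\cdot,\Phi_{\lambda+q})\}$.
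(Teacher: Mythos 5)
Your route is genuinely different from the paper's: you decompose at the continuous first-passage time $\tau_0^-$ and compute the below-zero factor $M(y)$ by hand, identifying the excursion-wise transform with the survival function of $\mathrm{Exp}(\lambda)+\mathrm{Exp}(p)$ evaluated at $\tau_0^+$, whereas the paper decomposes at the Poissonian first-passage time $T_0^-=\min\{T_i\colon X_{T_i}<0\}$ and feeds the known identities \eqref{jointLap1sd} and \eqref{jointLap1b} into the resulting renewal equation. Your closed form for $M$, the application of the Gerber--Shiu identity at $\theta=\Phi_{p+q}$ and $\theta=\Phi_{\lambda+q}$, and the reassembly into $\tilde{Z}_q$ are all correct as algebra, and the two-exponential identification is exactly the mechanism behind the paper's link \eqref{linkerlang} to Parisian ruin with summed exponential delays.

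There is, however, a genuine gap at the step ``setting $x=0$ \ldots solves linearly for $u(0)$.'' Your renewal equation at $x=0$ reads
\begin{equation*}
u(0)=\frac{W_q(0)}{W_q(b)}+u(0)\left(1-\frac{W_q(0)}{W_q(b)}\cdot\frac{\bigl(\Phi_{\lambda+q}-\Phi_{p+q}\bigr)}{\lambda-p}\,\tilde{Z}_q\bigl(b,\Phi_{\lambda+q},\Phi_{p+q}\bigr)\right),
\end{equation*}
which determines $u(0)$ only when $W_q(0)>0$, i.e.\ only for paths of bounded variation. When $X$ has unbounded variation, $0$ is regular for $(-\infty,0)$, so $\tau_0^-=0$ $\p_0$-a.s., $W_q(0)=0$, $M(X_{\tau_0^-})=M(0)=1$, and the equation collapses to the tautology $u(0)=u(0)$; the constant cannot be extracted without an auxiliary approximation argument (an $\epsilon$-shift of the barrier, or approximation by bounded-variation processes), which you do not supply. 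This is precisely the obstruction that the paper's choice of $T_0^-$ is designed to avoid: $T_0^->0$ a.s.\ for every spectrally negative Lévy process, so the coefficient $1-\e\bigl[\me^{-qT_0^-+\Phi_{p+q}X_{T_0^-}}\ind_{\{T_0^-<\tau_b^+\}}\bigr]$ is strictly positive and $g(0)$ is determined in one stroke, uniformly in the path variation (the ``unified proof'' advertised in the introduction). To close your argument you must either add the limiting step for the unbounded-variation case or replace $\tau_0^-$ by $T_0^-$ --- at which point you essentially recover the paper's proof.
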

\begin{proof}
First, we set
\begin{equation*}
g\left( x\right) =\mathbb{E}_{x}\left[ \mathrm{e}^{-q\tau _{b}^{+}-p\Occ%
_{\tau _{b}^{+},\lambda }}\mathbf{1}_{\left\{ \tau _{b}^{+}<\infty \right\} }%
\right] .
\end{equation*}
For $x<0$, by the strong Markov property and the spectral negativity of $X$, 
\begin{equation}\label{xnegat}
g\left( x\right) =\mathbb{E}_{x}\left[ \mathrm{e}^{-\left( p+q\right) \tau
_{0}^{+}}\right] g\left( 0\right) .
\end{equation}%
For $0\leq x\leq b,$ again by the strong Markov property, 
\begin{equation}\label{xpositt}
g\left( x\right) =\mathbb{E}_{x}\left[ \mathrm{e}^{-q\tau _{b}^{+}}\mathbf{1}%
_{\left\{ \tau _{b}^{+}<T_{0}^{-}\right\} }\right] +\mathbb{E}_{x}\left[ 
\mathrm{e}^{-qT_{0}^{-}}g\left( X_{T_{0}^{-}}\right) \mathbf{1}_{\left\{
T_{0}^{-}<\tau _{b}^{+}\right\} }\right] .
\end{equation}
Hence, plugging \eqref{xnegat} in \eqref{xpositt} and using \eqref{jointLap1sd}, we deduce
\begin{equation}\label{alllllxx}
g\left( x\right) =\frac{Z_{q}\left( x,\Phi _{p+q}\right) }{Z_{q}\left(
b,\Phi _{p+q}\right) }+\mathbb{E}_{x}\left[ \mathrm{e}^{-qT_{0}^{-}+\Phi
_{p+q}X_{T_{0}^{-}}}\mathbf{1}_{\left\{ T_{0}^{-}<\tau _{b}^{+}\right\} }%
\right] g\left( 0\right)   
\end{equation}
For $x=0$, we have
\begin{eqnarray*}
g\left( 0\right)  &=&\frac{\mathbb{E}\left[ \mathrm{e}^{-q\tau _{b}^{+}}%
\mathbf{1}_{\left\{ \tau _{b}^{+}<T_{0}^{-}\right\} }\right] }{1-\mathbb{E}%
\left[ \mathrm{e}^{-qT_{0}^{-}+\Phi _{p+q}X_{T_{0}^{-}}}\mathbf{1}_{\left\{
T_{0}^{-}<\tau _{b}^{+}\right\} }\right] }=\frac{\frac{1}{Z_{q}\left( b,\Phi
_{\lambda +q}\right) }}{1-\dfrac{\lambda }{\lambda -p}\left( 1-\frac{%
Z_{q}\left( b,\Phi _{p+q}\right) }{Z_{q}\left( b,\Phi _{\lambda +q}\right) }%
\right) } \\
&=&\frac{\lambda -p}{\left( \lambda Z_{q}\left( b,\Phi _{p+q}\right)
-pZ_{q}\left( b,\Phi _{\lambda +q}\right) \right) }=\frac{\lambda -p}{\left(
\Phi _{\lambda +q}-\Phi _{p+q}\right) }\frac{1}{\tilde{Z}_{q}\left( b,\Phi
_{\lambda +q},\Phi _{p+q}\right) }
\end{eqnarray*}
Thus, replacing the expression of $g(0)$ in \eqref{alllllxx}, we finally have
\begin{eqnarray*}
g\left( x\right)  &=&\frac{Z_{q}\left( x,\Phi _{p+q}\right) }{Z_{q}\left(
b,\Phi _{p+q}\right) }+\left( \frac{\lambda -p}{\left( \Phi _{\lambda
+q}-\Phi _{p+q}\right) }\frac{1}{\tilde{Z}_{q}\left( b,\Phi _{\lambda
+q},\Phi _{p+q}\right) }\right)   \notag \\
&&\times \dfrac{\lambda }{\lambda -p}\left( \frac{Z_{q}\left( x,\Phi
_{p+q}\right) Z_{q}\left( b,\Phi _{\lambda +q}\right) -Z_{q}\left( x,\Phi
_{\lambda +q}\right) Z_{q}\left( b,\Phi _{p+q}\right) }{Z_{q}\left( b,\Phi
_{\lambda +q}\right) }\right)  \\
&=&\frac{\tilde{Z}_{q}\left( x,\Phi _{\lambda +q},\Phi _{p+q}\right) }{%
\tilde{Z}_{q}\left( b,\Phi _{\lambda +q},\Phi _{p+q}\right) }.
\end{eqnarray*}
%Now, we prove the identity in $(ii)$. First, let 
%\begin{equation*}
%l\left( x\right) =\mathbb{E}_{x}\left[ \mathrm{e}^{-q\tau _{0}^{-}-p\Occ%
%_{\tau _{0}^{-}}}\mathbf{1}_{\left\{ \tau _{0}^{-}<\infty \right\} }\right] 
%\end{equation*}
%
%For $0<x\leq a$, by the strong Markov property and and the spectral
%negativity of $X$, we have 
%\begin{eqnarray}
%l\left( x\right)  &=&\mathbb{E}_{x}\left[ \mathrm{e}^{-q\tau _{a}^{+}}\mathbf{1}_{\left\{ \tau _{a}^{+}<\tau _{0}^{-}\right\} }\right] g\left(a\right) +\mathbb{E}_{x}\left[ \mathrm{e}^{-q\tau _{0}^{-}}\mathbf{1}_{\left\{ \tau _{0}^{-}<\tau _{a}^{+}\right\} }\right]  \\&=&\frac{W_{q}\left( x\right) }{W_{q}\left( a\right) }l\left( a\right)+Z_{q}\left( x\right) -\frac{W_{q}\left( x\right) }{W_{q}\left( a\right) }%Z_{q}\left( a\right) .
%\end{eqnarray}%
%For $x>a,$ we have, again by the strong Markov property, we obtain 

\end{proof}
As an immediate consequence of Theorem \eqref{maintheotwosidedth}, we obtain the Laplace transform of $\Occ_{\infty ,\lambda }$.
\begin{corol}\label{Lapcor}
For $\lambda,p >0$, $\e[X_1]>0$ and $x \in \reals $, 
\begin{equation}\label{Lap}
\mathbb{E}_{x}\left[ \mathrm{e}^{-p\Occ_{\infty ,\lambda }}\right]=\mathbb{E}\left[ X_{1}\right] \frac{\Phi _{p}\Phi _{\lambda }}{\lambda p }\tilde{Z}\left(x,\Phi_{\lambda},\Phi_{p}  \right).
\end{equation}
\end{corol}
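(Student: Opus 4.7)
The plan is to specialize Theorem \ref{maintheotwosidedth} to $q=0$ and send $b\to\infty$. Since $\e[X_1]>0$ by assumption, $X$ drifts to $+\infty$, so $\tau_b^+<\infty$ almost surely, $\tau_b^+\uparrow\infty$ as $b\to\infty$, and (by monotonicity of the defining sum) $\Occ_{\tau_b^+,\lambda}\uparrow\Occ_{\infty,\lambda}$ pathwise. Dominated convergence (with bound $1$) therefore converts the left-hand side of \eqref{mainrestwosided} at $q=0$ into $\e_x[\me^{-p\Occ_{\infty,\lambda}}]$, while the numerator $\tilde{Z}(x,\Phi_\lambda,\Phi_p)$ on the right-hand side does not depend on $b$.

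It remains to identify $\lim_{b\to\infty}\tilde{Z}(b,\Phi_\lambda,\Phi_p)$. Using $\psi(\Phi_\theta)=\theta$ in \eqref{eq:ztilde}, I would rewrite this as $(\lambda Z(b,\Phi_p)-pZ(b,\Phi_\lambda))/(\Phi_\lambda-\Phi_p)$. From the integral representation \eqref{Zv2} with $q=0$, $Z(b,\Phi_p)=p\int_0^\infty\me^{-\Phi_p y}W(b+y)\md y$; dividing by $W(b)$ and passing to the limit inside the integral via \eqref{CM1} with $q=0$ (so $\Phi_0=0$, giving pointwise ratio $1$) yields $Z(b,\Phi_p)/W(b)\to p/\Phi_p$, and analogously $Z(b,\Phi_\lambda)/W(b)\to\lambda/\Phi_\lambda$. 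Since $X$ drifts to $+\infty$, $W(b)\to 1/\e[X_1]$, so combining these gives
\begin{equation*}
\tilde{Z}(b,\Phi_\lambda,\Phi_p)\longrightarrow\frac{1}{\e[X_1]}\cdot\frac{\lambda p/\Phi_p-p\lambda/\Phi_\lambda}{\Phi_\lambda-\Phi_p}=\frac{\lambda p}{\e[X_1]\,\Phi_p\Phi_\lambda}.
\end{equation*}
Inverting and multiplying by $\tilde{Z}(x,\Phi_\lambda,\Phi_p)$ delivers \eqref{Lap}.

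The only mild obstacle is justifying the interchange of limit and integral in the asymptotic for $Z(b,\Phi_p)/W(b)$: one needs a $b$-uniform integrable dominant for $\me^{-\Phi_p y}W(b+y)/W(b)$, which is standard from scale function theory (for any small $\epsilon>0$, $W(b+y)/W(b)\leq C\me^{\epsilon y}$ uniformly for $b$ large enough, so the dominant $C\me^{-(\Phi_p-\epsilon)y}$ is integrable). The degenerate case $\lambda=p$ (where $\Phi_\lambda=\Phi_p$) is then handled by continuity of both sides in the parameters, with $\tilde{Z}$ collapsing to the derivative form \eqref{eq:ztilde2}.
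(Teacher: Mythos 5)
Your proof is correct and takes essentially the same route as the paper: specialize Theorem~\ref{maintheotwosidedth} to $q=0$, let $b\to\infty$, and evaluate $\lim_{b\to\infty}\tilde{Z}(b,\Phi_\lambda,\Phi_p)$ via $Z(b,\theta)/W(b)\to\psi(\theta)/\theta$ together with $W(b)\to 1/\e[X_1]$. You additionally justify the convergence of the left-hand side, the interchange of limit and integral, and the degenerate case $\lambda=p$, all of which the paper leaves implicit.
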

\begin{proof}
Using the fact that,
\begin{equation*}\lim_{b \rightarrow \infty}\dfrac{Z(b,\theta)}{W(b)}=\dfrac{\psi(\theta)}{\theta}, \quad \theta >0,
\end{equation*}
we have
\begin{equation*}
\lim_{b \rightarrow \infty}
\dfrac{\tilde{Z}\left(b,\Phi_{\lambda},\Phi_{p}  \right)}{W(b)}=\dfrac{\lambda p \left(\Phi_{\lambda}-\Phi_{p}\right)}{\Phi_{\lambda}\Phi_{p}},
\end{equation*}
and thus
\begin{equation*}
\lim_{b \rightarrow \infty} \dfrac{\tilde{Z}\left(x,\Phi_{\lambda},\Phi_{p}  \right)/W(b)}{\tilde{Z}\left(b,\Phi_{\lambda},\Phi_{p}  \right)/W(b)}=\mathbb{E}\left[ X_{1}\right] \frac{\Phi _{p}\Phi _{\lambda }}{\lambda p }\tilde{Z}\left(x,\Phi_{\lambda},\Phi_{p}  \right).
\end{equation*}
\end{proof}
%\begin{equation*}
%\mathbb{E}_{x}\left[ \mathrm{e}^{-p\Occ_{\infty ,\lambda }}\right] =\mathbb{E%
%}\left[ X_{1}\right] \frac{\Phi _{p}\Phi _{\lambda }}{\left( \Phi _{\lambda
%}-\Phi _{p}\right) }\left( Z\left( x,\Phi _{p}\right) \dfrac{1}{p}-Z\left(
%x,\Phi _{\lambda }\right) \frac{1}{\lambda %}\right) .
%\end{equation*}
In the next theorem, we give an explicit expression for the distribution of $ \Occ_{\infty ,\lambda }$.
\begin{theorem}
For $\lambda>0$, $r\geq 0$ and $x \in \reals $, we have
\begin{eqnarray}\label{delaydistr}
\mathbb{P}_{x}\left( \Occ_{\infty ,\lambda }\in \mathrm{d}r\right)  &=&%
\mathbb{E}\left[ X_{1}\right] \frac{\Phi
_{\lambda }}{\lambda }Z\left( x,\Phi _{\lambda }\right)\delta _{0}\left( \mathrm{d}r\right) +\mathbb{E}\left[
X_{1}\right]\frac{\Phi _{\lambda }^{2}}{%
\lambda }\Gamma_\lambda \left(r\right) Z\left( x,\Phi _{\lambda }\right) \mathrm{d}r \notag \\
&&-\mathbb{E}\left[ X_{1}\right] \Phi _{\lambda }\left( \Gamma_{\lambda } \left(r\right) W\left( x\right) +\int_{0}^{r}\Gamma _{\lambda }\left(r-s\right) \Lambda ^{\prime }\left( x,s\right) \mathrm{d}s\right)
\mathrm{d}r,
\end{eqnarray}
where 
$$\Gamma_\lambda (r)=\int^{\infty}_0 \me^{\Phi_\lambda z} \dfrac{z}{r}\p\left(X_r \in \md z\right),$$
and 
$$
\Lambda ^{\prime}\left( x,r\right) =\int_{0}^{\infty }W^{\prime}\left(x+z\right) \frac{z}{r}\mathbb{P}\left( X_{r}\in \mathrm{d}z\right).
$$
\end{theorem}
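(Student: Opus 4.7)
The plan is to invert the Laplace transform of $\Occ_{\infty,\lambda}$ given in Corollary~\ref{Lapcor} by taking the Laplace transform of the candidate distribution in \eqref{delaydistr} term-by-term and checking that the two expressions agree. Uniqueness of the Laplace transform then closes the argument.

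The main computational inputs are two Laplace identities derived from Kendall's identity \eqref{eq:Kendall}. First, since $\int_0^{\infty} e^{-pr}\frac{z}{r}\p(X_r \in \md z)\,\md r = e^{-\Phi_p z}\,\md z$, Fubini gives, for $p>\lambda$,
\eqn{\int_0^{\infty} e^{-pr}\,\Gamma_\lambda(r)\,\md r = \int_0^{\infty} e^{\Phi_\lambda z - \Phi_p z}\,\md z = \frac{1}{\Phi_p - \Phi_\lambda}.}
Second, the same Fubini swap, followed by an integration by parts in $z$ and the extension of \eqref{Zv2} (which also holds for $x<0$ since $W(x+z)=0$ for $z<-x$), yields
\eqn{\int_0^{\infty} e^{-pr}\Lambda'(x,r)\,\md r = \int_0^{\infty} W'(x+z) e^{-\Phi_p z}\,\md z = -W(x) + \frac{\Phi_p\, Z(x,\Phi_p)}{p}.}
Here I use the fact that $e^{\Phi_\lambda X_r}$ has exponential moment $e^{r\lambda}$, so both integrals are finite on the chosen half-plane $p>\lambda$, which is enough to determine the distribution.

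Applying these two identities, the convolution theorem, and the fact that the atom at $0$ has constant Laplace transform, the Laplace transform of the right-hand side of \eqref{delaydistr} becomes
\eqn{\mathbb{E}[X_1]\frac{\Phi_\lambda}{\lambda}Z(x,\Phi_\lambda)+\frac{\mathbb{E}[X_1]\Phi_\lambda}{\Phi_p-\Phi_\lambda}\left[\frac{\Phi_\lambda Z(x,\Phi_\lambda)}{\lambda}-W(x)+W(x)-\frac{\Phi_p Z(x,\Phi_p)}{p}\right],}
where the two $W(x)$'s, arising from the $-\Phi_\lambda\Gamma_\lambda W(x)$ term and from the $-W(x)$ piece of the Laplace transform of $\Lambda'$, cancel. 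Putting everything over the common denominator $\lambda p(\Phi_p-\Phi_\lambda)$ and telescoping the factor $\Phi_p-\Phi_\lambda+\Phi_\lambda=\Phi_p$, the expression simplifies to
\eqn{\frac{\mathbb{E}[X_1]\,\Phi_\lambda\Phi_p\bigl[\lambda Z(x,\Phi_p)-p Z(x,\Phi_\lambda)\bigr]}{\lambda p(\Phi_\lambda-\Phi_p)}=\mathbb{E}[X_1]\frac{\Phi_p\Phi_\lambda}{\lambda p}\tilde{Z}(x,\Phi_\lambda,\Phi_p),}
which is exactly \eqref{Lap}. By the uniqueness of Laplace transforms on $[0,\infty)$, \eqref{delaydistr} follows.

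The hardest part is really just bookkeeping: matching the three ``density'' terms with the two building blocks $1/(\Phi_p-\Phi_\lambda)$ and $\Phi_p Z(x,\Phi_p)/p$ and checking that the $W(x)$'s cancel, so that the final combination collapses into $\tilde Z(x,\Phi_\lambda,\Phi_p)$. A minor technical point is justifying Fubini and the integration by parts uniformly in $x\in\reals$ (including $x<0$, where $W(x)=0$ and $W'(x+z)$ is supported on $z\geq-x$); this is handled by the exponential integrability of $X_r$ under $\p$ and the monotonicity of $W$.
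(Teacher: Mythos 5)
Your proof is correct and follows essentially the same route as the paper's: both rest on the Kendall-identity computation $\int_0^\infty \me^{-pr}\Gamma_\lambda(r)\,\md r=1/(\Phi_p-\Phi_\lambda)$ and the identity $\int_0^\infty \me^{-pr}\Lambda'(x,r)\,\md r=\Phi_p Z(x,\Phi_p)/p-W(x)$, combined with uniqueness of Laplace transforms. The only difference is directional and cosmetic: you transform the candidate measure forward and match it against \eqref{Lap} (rederiving the $\Lambda'$ identity from Kendall's identity and integration by parts rather than citing it), whereas the paper decomposes \eqref{Lap} into the same building blocks and inverts term by term.
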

\begin{proof}
From \eqref{Lap}, we have 
\begin{eqnarray*}
\mathbb{E}_{x}\left[ \mathrm{e}^{-p\Occ_{\infty ,\lambda }}\right]  &=&%
\mathbb{E}\left[ X_{1}\right] \frac{\Phi _{p}\Phi _{\lambda }}{\lambda p}%
\left( \frac{pZ\left( x,\Phi _{\lambda }\right) -\lambda Z\left( x,\Phi
_{p}\right) }{\Phi _{p}-\Phi _{\lambda }}\right)   \\
&=&\mathbb{E}\left[ X_{1}\right] \frac{Z\left( x,\Phi _{\lambda }\right)
\Phi _{\lambda }}{\lambda }\left( 1+\frac{\Phi _{\lambda }}{\Phi _{p}-\Phi
_{\lambda }}\right) -\mathbb{E}\left[ X_{1}\right] \Phi _{\lambda }\frac{%
\Phi _{p}Z\left( x,\Phi _{p}\right) }{p\left( \Phi _{p}-\Phi _{\lambda
}\right) }.
\end{eqnarray*}
Using Kendall's identity and Tonelli's Theorem, we have 
\begin{equation}\label{id1G}
\int_{0}^{\infty }\me^{-pr}\Gamma_{\lambda }(r)\md r=\dfrac{1}{\Phi_p-\Phi_\lambda}, \quad p>\lambda.
\end{equation}
We also have the following identities from Landriault et al. \cite{lietal2019}, 
\begin{equation}\label{id1}
\frac{\Phi_{p}Z\left( x,\Phi_{p}\right) 
}{p}=\int_{0}^{\infty }\me^{-py}\left(\Lambda ^{\prime }\left( x,y\right) +W\left( x\right)\delta _{0}\left( \mathrm{d}y\right) \right) \mathrm{d}y,
\end{equation}
%\begin{equation}\label{id2}
%\frac{\Phi_{q+\lambda}Z_{\lambda }\left( x,\Phi_{\lambda +q}\right) }{q\left( \lambda +q\right) }=\int_{0}^{\infty }\me^{-qy}\left(\me^{-\lambda y}\int_{0}^{y}\Lambda ^{\left( \lambda \right) \prime }\left(
%x,s\right) \mathrm{d}s+\me^{-\lambda y}W_{\lambda }\left( x\right) \right)  \mathrm{d}y,
%\end{equation}
%\begin{equation}\label{id3}
%\frac{Z\left( x,\Phi_{p}\right) }{p}=\int_{0}^{\infty}\me^{-py}\Lambda \left(
%x,y\right) \mathrm{d}y.
%\end{equation}
%\begin{equation}\label{id4}
%\frac{Z_{\lambda }\left( x,\Phi_{\lambda +q}\right) }{q(\lambda +q)}=\int_{0}^{\infty
%}\me^{-qy}\left(\me^{-\lambda y}\int_{0}^{y} %\Lambda ^{\left( \lambda \right) }\left(x,s\right)\mathrm{d}s \right) \mathrm{d}y.
%\end{equation}
%where the function $\Lambda$ (also known as the \textit{delayed scale function}, see Loeffen et al. \cite{loeffenetal2017}) is defined by
%\begin{equation*}
%\Lambda \left( x,r\right) =\int_{0}^{\infty }W\left(x+z\right) \frac{z}{r}\mathbb{P}\left( X_{r}\in \mathrm{d}z\right),
%\end{equation*}
%and its partial derivative with respect to $x$ is giving by
%\begin{equation*}
%\Lambda^{\prime} \left( x,r\right)=%\frac{\partial \Lambda }{\partial x}(x,r) =\int_{0}^{\infty }W^{\prime}\left(x+z\right) \frac{z}{r}\mathbb{P}\left( X_{r}\in \mathrm{d}z\right).
%\end{equation*}
By Laplace inversion, we deduce
\begin{eqnarray*}
\mathbb{P}_{x}\left( \Occ_{\infty ,\lambda }\in \mathrm{d}r\right)  &=&%
\mathbb{E}\left[ X_{1}\right] Z\left( x,\Phi _{\lambda }\right) \frac{\Phi
_{\lambda }}{\lambda }\delta _{0}\left( \mathrm{d}r\right) +\mathbb{E}\left[
X_{1}\right] Z\left( x,\Phi _{\lambda }\right) \frac{\Phi _{\lambda }^{2}}{%
\lambda }\Gamma_{\lambda } \left(r\right) \mathrm{d}r \\
&&-\mathbb{E}\left[ X_{1}\right] \Phi _{\lambda }\left[ \Gamma_{\lambda } \left(r\right) W\left( x\right) +\int_{0}^{r}\Gamma _{\lambda }\left(r-s\right) \Lambda ^{\prime }\left( x,s\right) \mathrm{d}s\right]
\mathrm{d}r,
\end{eqnarray*}
\end{proof}
\begin{remark}
We can rewrite formula \eqref{delaydistr} as
\begin{equation*}
\mathbb{P}_{x}\left( \Occ_{\infty ,\lambda }\in \mathrm{d}r\right) =\mathbb{P%
}_{x}\left( T_{0}^{-}=\infty \right) \delta _{0}\left( \mathrm{d}r\right) +%
\mathbb{P}_{x}\left( \Occ_{\infty ,\lambda }\in \mathrm{d}r,T_{0}^{-}<\infty
\right),   \notag  
\end{equation*}
where 
\begin{equation*}
\mathbb{P}_{x}\left( T_{0}^{-}=\infty\right) =\mathbb{E}\left[ X_{1}\right] \frac{\Phi
_{\lambda }}{\lambda }Z\left( x,\Phi _{\lambda }\right) ,
\end{equation*}
and 
\begin{eqnarray*}
\mathbb{P}_{x}\left( \Occ_{\infty ,\lambda }\in \mathrm{d}r,T_{0}^{-}<\infty
\right)  &=&\mathbb{E}\left[ X_{1}\right] \frac{\Phi _{\lambda }\Gamma
_{\lambda }\left( r\right) }{\lambda }\left( \Phi _{\lambda }Z\left( x,\Phi _{\lambda }\right) -\lambda
W\left( x\right) \right) \mathrm{d}r  \notag  \label{delaydistr} \\
&&-\mathbb{E}\left[ X_{1}\right] \Phi _{\lambda }\left( \int_{0}^{r}\Gamma
_{\lambda }\left( r-s\right) \Lambda ^{\prime }\left( x,s\right) \mathrm{d}%
s\right) \mathrm{d}r.
\end{eqnarray*}
Under continuous monitoring, we have the following expression for the distribution of   $\Occ_{\infty}$
\begin{equation*}\label{districon}
\p_{x}\left( \Occ_{\infty} \in  \md r \right)= \e[X_{1}] \left(
W(x)\delta_0(\md r)+ \Lambda^{\prime}(x,r)\md r \right).
\end{equation*}
(See Landriault et al. \cite{lietal2019} for more results on the distribution of occupations times).
%However, it is difficult to prove the convergence of the distribution in \eqref{delaydistr} to the one in \eqref{districon} when $\lambda \rightarrow \infty.$
\end{remark}
\subsection{Discussion on the results}\label{sect:discussion}
Our fluctuation identities are arguably compact and have a similar structure as classical fluctuation identities (continuous monitoring). Indeed, from Theorem\eqref{maintheotwosidedth} we recover the identity in \cite[Corollary 2.(ii)]{loeffenetal2014} which is the joint Laplace transform of $\left(\tau_{b}^{+},\Occ_{\tau_{b}^{+}}\right)$ given by
$$\mathbb{E}_{x}\left[ \mathrm{e}^{-q\tau_{b}^{+}-p\Occ_{\tau_{b}^{+} }}
\ind_{\left\lbrace \tau_{b}^{+}<\infty \right\rbrace}\right]=\dfrac{Z_q\left(x,\Phi_{p+q}  \right)}{Z_q\left(b,\Phi_{p+q}  \right)},$$
which follows immediately from the fact that $\lim_{\lambda \rightarrow \infty}Z_q(b,\Phi_{\lambda+q})/\lambda=0.$ %Note that there is a one-to-one correspondance between $Z_q (x,\Phi_{p+q})$ just defined and the function $\mathcal{H}^{(p+q,-q)}(x)$ defined in \cite{loeffenetal2014}; another version of this function was also defined in \cite{baurdoux_et_al_2015}.
We also recover the Laplace transform of $\Occ_{\infty}$ by letting $\lambda\rightarrow \infty$ in Equation \eqref{Lap}. Indeed, we have
\begin{eqnarray}\label{rem1}
\mathbb{E}_{x}\left[ \mathrm{e}^{-p\Occ_{\infty }}\right]  &=&\lim_{\lambda
\rightarrow \infty }\mathbb{E}_{x}\left[ \mathrm{e}^{-p\Occ_{\infty ,\lambda
}}\right] =\lim_{\lambda \rightarrow \infty }\mathbb{E}\left[ X_{1}\right] 
\frac{\Phi _{p}\Phi _{\lambda }}{\lambda p}\tilde{Z}\left( x,\Phi _{\lambda
},\Phi _{p}\right) \notag \\
&=&\lim_{\lambda \rightarrow \infty }\mathbb{E}\left[ X_{1}\right] \frac{%
Z\left( x,\Phi _{\lambda }\right) \Phi _{\lambda }}{\lambda }\left( \frac{%
\Phi _{p}}{\Phi _{p}-\Phi _{\lambda }}\right) -\lim_{\lambda \rightarrow
\infty }\mathbb{E}\left[ X_{1}\right] \Phi _{\lambda }\frac{\Phi _{p}Z\left(
x,\Phi _{p}\right) }{p\left( \Phi _{p}-\Phi _{\lambda }\right) }\notag \\
&=&\mathbb{E}\left[ X_{1}\right] \frac{\Phi _{p}}{p}Z\left( x,\Phi
_{p}\right) ,
\end{eqnarray}
where in the last equality follows from the fact that $\Phi_{\lambda}\rightarrow \infty$ when $\lambda\rightarrow \infty$ and
$$\lim_{\lambda \rightarrow \infty}  \frac{Z\left( x,\Phi _{\lambda }\right)
\Phi _{\lambda }}{\lambda }=\lim_{\lambda \rightarrow \infty}\Phi _{\lambda } \int^{\infty}_{0}\me^{-\Phi _{\lambda }y}W(x+y)\md y=\lim_{y \rightarrow 0} W(x+y) =W(x).$$
which follows using the Initial Value Theorem of Laplace transform. \\
Also, given that $\tilde{Z}\left(x,\Phi_{\lambda},\Phi_{p}  \right)=\tilde{Z}\left(x,\Phi_{p},\Phi_{\lambda}  \right)$, we obtain
\begin{equation}\label{rem2}
\lim_{p \rightarrow \infty} \mathbb{E}\left[ X_{1}\right] \frac{\Phi _{p}\Phi _{\lambda }}{\lambda p }\tilde{Z}\left(x,\Phi_{\lambda},\Phi_{p}  \right)=\mathbb{E%
}\left[ X_{1}\right] \frac{\Phi _{\lambda}}{\lambda} Z\left( x,\Phi _{\lambda}\right).
\end{equation}
\begin{remark}
It is possible to extend our results to Poissonian occupation times of an arbitrary interval $(a,b)$. Indeed, let $\tau_{a,b} = \tau_b^+ \wedge \tau_a^- $, we define
\begin{equation}
\Occ_{t,\lambda}(a,b)= \sum_{n\in \mathbb{N}}( \tau_{a,b} \circ \theta_{T_n})\ind_{\left\lbrace a<X_{T_n}<b,T_n< t \right\rbrace}.
\end{equation}
This is left for future research.
\end{remark}
\section{Applications : Parisian ruin}\label{sectionexamp}
In this section, we want to provide a link between Poissonian occupation times and Parisian ruin models with Erlang-distributed implementation delays. %As we mentioned in the introduction, the link between the occupation time $\Occ_{\infty}$ and the Parisian ruin $\rho_\lambda$ does not hold if the delay is replaced by an Erlang distribution. 
\subsection{Parisian ruin with implementation delays given by a sum of two independent exponential random variables}\label{sect:subssectionexamp}
Under the Poissonian occupation time, occupation time is accumulated once the process is observed to be negative at Poisson arrival times. In other words, occupation time is accumulated when process stays below $0$ for a period of time equal to a copy of an  exponentially distributed r.v. $\me_\lambda$ with rate $\lambda>0$. Now, if we consider implementation clock given by an exponentially distributed r.v. $\me_p$ with rate $p>0$ (independent of $X$ and $\me_\lambda$), then $\p_{x}\left( \Occ_{\infty ,\lambda }> \me_p \right)$ corresponds to the probability of Parisian ruin with implementation delays modelled the sum of $\me_p$ and $\me_\lambda$. Hence, we have established the following connection
%First, it is worthwhile to mention that Equation \eqref{rem3} holds because the Laplace transform $\mathbb{E}_{x}\left[ \mathrm{e}^{-p\Occ_{\infty ,\lambda }}\right]$ corresponds to the probability of Parisian of non-ruin with implementation delay given by a sum of two exponential random variables with rates $p$ and $\lambda$. More specifically, for each excursion below $0$ of the surplus process $X$, occupation time is accumulated only if $X$ spends an amount of time below $0$ equal to a copy of $\me_\lambda$. At that moment, another implementation clock modelled by a copy of $\me_p$ is started and Parisian ruin occurs if the excursion is longer than this implementation delay. Then, from the above discussion, we have 
\begin{equation}\label{linkerlang}
\p_x\left(\rho^{(p,\lambda)}<\infty \right)=\p_{x}\left( \Occ_{\infty ,\lambda }>\me_p \right)=1-\mathbb{E}_{x}\left[ \mathrm{e}^{-p\Occ_{\infty \lambda }}\right],
\end{equation}
where $\rho^{(p,\lambda)}$ is the Parisian ruin time given by
\begin{equation}\label{ruinsum}
\rho^{(p,\lambda)}=\inf \left\{ t>0\text{ | }t-g_{t}>\me^{g_{t}}_{p}+\me^{g_{t}}_{\lambda} \right\},
\end{equation}
and for the finite-time case, 
\begin{equation*}
\p_x\left(\rho^{(p,\lambda)}\leq t \right)=1-\mathbb{E}_{x}\left[ \mathrm{e}^{-p\Occ_{t ,\lambda}}\right].
\end{equation*}
Thus, using the relationship in Equation \eqref{linkerlang} together with Corollary \eqref{Lapcor}, we obtain the following expression for $\p_{x}\left(\rho^{(p,\lambda)}<\infty \right).$
 \begin{corol}\label{parruinerrl}
 For $p,\lambda>0$, $x\in \reals$ and $\e \left[ X_1\right]>0$, 
 \begin{equation} \label{Parsum}
\p_{x}\left(\rho^{(p,\lambda)}<\infty \right) =1-\e\left[ X_{1}\right] \frac{\Phi_{\lambda}\Phi _{p}}{\lambda p}\tilde{Z}\left(x,\Phi_{\lambda},\Phi_{p}  \right).
\end{equation}
 \end{corol}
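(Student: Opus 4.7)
The result is a direct consequence of two ingredients already developed in the paper: the probabilistic link \eqref{linkerlang}, established in the discussion preceding the corollary, and the closed-form Laplace transform of the infinite-horizon Poissonian occupation time from Corollary \ref{Lapcor}. There is no new computation to perform beyond a single substitution, so the plan is essentially just to chain these two ingredients together.

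In more detail, the plan is to start from \eqref{linkerlang} and read off $\p_x(\rho^{(p,\lambda)}<\infty) = 1 - \e_x[\me^{-p \Occ_{\infty,\lambda}}]$. Then I would invoke \eqref{Lap} to replace the Laplace transform on the right-hand side by its explicit scale-function expression, $\e_x[\me^{-p \Occ_{\infty,\lambda}}] = \e[X_1]\,\frac{\Phi_p \Phi_\lambda}{\lambda p}\,\tilde{Z}(x,\Phi_\lambda,\Phi_p)$. The hypotheses of Corollary \ref{Lapcor} match those of Corollary \ref{parruinerrl} ($p,\lambda>0$, $\e[X_1]>0$, $x\in\reals$), so the substitution is legitimate and yields exactly \eqref{Parsum}.

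The main conceptual ingredient therefore lies outside the corollary itself: it is the link \eqref{linkerlang}, whose derivation is the substantive work. The second equality in \eqref{linkerlang}, $\p_x(\Occ_{\infty,\lambda} > \me_p) = 1-\e_x[\me^{-p\Occ_{\infty,\lambda}}]$, is the standard tail identity for an independent $\mathrm{Exp}(p)$ random variable, obtained by conditioning on $\Occ_{\infty,\lambda}$ and using $\p(\me_p > s) = \me^{-ps}$. The first equality, which identifies the Parisian ruin event with delay $\me_p + \me_\lambda$ with $\{\Occ_{\infty,\lambda} > \me_p\}$, relies on the memoryless property applied excursion-by-excursion: the component $\me_\lambda$ of the delay is absorbed into the Poisson observation mechanism that defines $\Occ_{\infty,\lambda}$, while the residual delay $\me_p$ plays the role of a single exponential threshold against the accumulated Poissonian time. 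Once this link is granted, Corollary \ref{parruinerrl} follows instantly from the substitution described above.
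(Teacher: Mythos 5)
Your proposal matches the paper's own argument exactly: the corollary is stated as an immediate consequence of the link \eqref{linkerlang} combined with Corollary \ref{Lapcor}, which is precisely the substitution you carry out. Your additional remarks justifying \eqref{linkerlang} (the tail identity for $\me_p$ and the excursion-by-excursion role of $\me_\lambda$) mirror the informal discussion the paper gives just before the corollary, so there is nothing missing.
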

\begin{remark}
Using the discussion in subsection \eqref{sect:discussion}, one immediately recovers the expression for the probabilities of Parisian ruin with exponentially distributed implementation delays, 
$$\lim_{p \rightarrow \infty}\p_{x}\left(\rho^{(p,\lambda)}=\infty \right)=\mathbb{E}\left[ X_{1}\right] \frac{\Phi _{\lambda}}{\lambda}Z\left( x,\Phi_{\lambda}\right) =\p_{x}\left(\rho_{\lambda}=\infty \right),$$
$$\lim_{\lambda \rightarrow \infty}\p_{x}\left(\rho^{(p,\lambda)}=\infty \right)=\mathbb{E}\left[ X_{1}\right] \frac{\Phi _{p}}{p}Z\left( x,\Phi_{p}\right) =\p_{x}\left(\rho_{p}=\infty \right) ,$$
and finally, \eqref{Parsum} reduces to $\mathbb{P}\left(\tau _{0}^{-}<\infty \right) $ given that
%\begin{multline*}
%\lim_{p\rightarrow \infty }\lim_{\lambda \rightarrow \infty }\mathbb{E}\left[
%X_{1}\right] \frac{\Phi _{p}\Phi _{\lambda }}{\lambda p}\tilde{Z}\left(
%x,\Phi _{\lambda },\Phi _{p}\right) =\lim_{\lambda \rightarrow \infty}\lim_{p\rightarrow \infty }\mathbb{E}\left[ X_{1}\right] \frac{\Phi_{p}\Phi _{\lambda }}{\lambda p}\tilde{Z}\left( x,\Phi _{\lambda },\Phi_{p}\right)  \\
%=\mathbb{E}\left[ X_{1}\right] W\left( x\right) =\mathbb{P}_{x}\left( \tau_{0}^{-}=\infty \right) .
%\end{multline*}
\begin{eqnarray*}
\lim_{p\rightarrow \infty }\lim_{\lambda \rightarrow \infty }\mathbb{E}\left[
X_{1}\right] \frac{\Phi _{p}\Phi _{\lambda }}{\lambda p}\tilde{Z}\left(
x,\Phi _{\lambda },\Phi _{p}\right)  &=&\lim_{\lambda \rightarrow \infty
}\lim_{p\rightarrow \infty }\mathbb{E}\left[ X_{1}\right] \frac{\Phi
_{p}\Phi _{\lambda }}{\lambda p}\tilde{Z}\left( x,\Phi _{\lambda },\Phi
_{p}\right)   \label{rem3} \\
&=&\mathbb{E}\left[ X_{1}\right] W\left( x\right) =\mathbb{P}_{x}\left( \tau
_{0}^{-}<\infty \right) .
\end{eqnarray*}
%\begin{equation}\label{rem3}
%\lim_{p \rightarrow \infty} \lim_{\lambda \rightarrow \infty}\mathbb{E}\left[ X_{1}\right] \frac{\Phi _{p}\Phi _{\lambda }}{\lambda p }\tilde{Z}\left(x,\Phi_{\lambda},\Phi_{p}  \right)= \lim_{\lambda \rightarrow \infty}\lim_{p \rightarrow \infty}\mathbb{E}\left[ X_{1}\right] \frac{\Phi_{p}\Phi _{\lambda }}{\lambda p }\tilde{Z}\left(x,\Phi_{\lambda},\Phi_{p}  \right)=\e\left[ X_{1}\right] W\left( x\right)=\p_{x}\left(\tau^{-}_0<\infty \right.
%\end{equation}
%where the last quantity corresponds to the \emph{classical} probability of non-ruin.
%$$\lim_{p \rightarrow \infty}\lim_{\lambda \rightarrow \infty}\p_{x}\left(\rho^{(p,\lambda)}<\infty \right)=1-\mathbb{E}\left[ X_{1}\right]W(x)=\p_{x}\left(\tau^{-}_0<\infty \right).$$
\end{remark}
\begin{remark}
If we reverse the roles of $\me_{p}$ and $\me_{\lambda}$, that is, supposing arrival times of an independent Poisson process of rate $p>0$ and $\me_{\lambda}$ as an implementation clock, we also have
\begin{equation*}
\p_x\left(\rho^{(p,\lambda)}<\infty \right)=1-\mathbb{E}_{x}\left[ \mathrm{e}^{-\lambda\Occ_{\infty ,p}}\right].
\end{equation*}
\end{remark}
In the next theorem, we give further fluctuation identities involving the Parisian ruin time $\rho^{(p,\lambda)}$.
\begin{theorem}\label{forGs}
For $p,\lambda>0$, $b,q,\theta\geq 0$ and $x\leq b$, 
%\begin{equation}\label{twosidedsum}
%\e_{x}\left[\me^{-q\rho^{(p,\lambda)}}\ind_{\left\lbrace\rho^{(p,\lambda)}< \tau^{+}_b\right\rbrace} \right]=\dfrac{p}{\left(p+q\right)\left(\lambda+q\right)}
%\left(\mathcal{E}^{(q,\lambda)}(x) -\dfrac{\tilde{Z}_q\left(x,\Phi_{\lambda+q},\Phi_{p+q}  \right)}{\tilde{Z}_q\left(b,\Phi_{\lambda+q},\Phi_{p+q}  \right)} \mathcal{E}^{(q,\lambda)}(b) \right),
%\end{equation}
\begin{multline}\label{joinsum}
\mathbb{E}_{x}\left[ \mathrm{e}^{-q\rho ^{\left( p,\lambda \right) }+\theta
X_{\rho ^{\left( p,\lambda \right) }}}\mathbf{1}_{\left\{ \rho ^{\left(
p,\lambda \right) }<\tau _{b}^{+}\right\} }\right]\notag \\ =\frac{p}{\psi
_{q+\lambda }\left( \theta \right) \psi _{q+p}\left( \theta \right) }\left( 
\mathcal{E}^{\left( q,\lambda \right) }\left( x,\theta \right) -\frac{\tilde{%
Z}_{q}\left( x,\Phi _{\lambda +q},\Phi _{p+q}\right) }{\tilde{Z}_{q}\left(
b,\Phi _{\lambda +q},\Phi _{p+q}\right) }\mathcal{E}^{\left( q,\lambda
\right) }\left( b,\theta \right) \right) ,
\end{multline}
where 
\begin{equation*}
\mathcal{E}^{\left( q,\lambda \right) }\left( x,\theta \right) =\lambda
Z_{q}\left( x,\theta \right) -\psi _{q}\left( \theta \right) Z_{q}\left(
x,\Phi _{\lambda +q}\right).
\end{equation*}
For $x\in \reals$, 
\begin{multline}\label{twosidedsum3}
\e_{x}\left[\me^{-q\rho^{(p,\lambda)}+\theta X_{\rho^{(p,\lambda)}}}\ind_{\left\lbrace\rho^{(p,\lambda)}< \infty\right\rbrace} \right]\\=\frac{p}{\psi
_{q+\lambda }\left( \theta \right) \psi _{q+p}\left( \theta \right) }
\left(\mathcal{E}^{(q,\lambda)}(x,\theta) -\dfrac{\psi _{q}\left( \theta \right)(\Phi _{q+\lambda }-\theta )\left( \Phi _{p+q}-\Phi _{q}\right) }{p \left(\theta-\Phi_{q}\right)}\tilde{Z}_q\left(x,\Phi_{\lambda+q},\Phi_{p+q}  \right) \right).
\end{multline}
For $-a\leq x \leq b$,
\begin{equation}\label{mainrestwosidedlerl}
\mathbb{E}_{x}\left[ \mathrm{e}^{-q\tau_{b}^{+}}\ind_{\left\lbrace \tau_{b}^{+}<\rho^{(p,\lambda)}\wedge\tau_{-a}^{-} \right\rbrace}\right]=\dfrac{\tilde{W}_{q}^{\left( p,\lambda \right) }\left( x,a\right)  }{\tilde{W}_{q}^{\left( p,\lambda \right) }\left( b,a\right)  },
\end{equation}
where 
\begin{equation*}
\tilde{W}_{q}^{\left( p,\lambda \right) }\left( x,a\right) =\lambda \mathcal{W}_{x}^{\left(
q,p\right) }\left(x+a\right)W_{q+\lambda}(a)-p\mathcal{W}_{x}^{\left(q,\lambda\right) }\left(x+a\right)W_{p+q}(a).
\end{equation*}
When $a\rightarrow \infty$, we obtain
\begin{equation}\label{twosidedsum2}
\e_{x}\left[\me^{-q\tau^{+}_b}\ind_{\left\lbrace \tau^{+}_b<\rho^{(p,\lambda)}\right\rbrace} \right]=\dfrac{\tilde{Z}_q\left(x,\Phi_{\lambda+q},\Phi_{p+q}  \right)}{\tilde{Z}_q\left(b,\Phi_{\lambda+q},\Phi_{p+q}  \right)}.
\end{equation}
\end{theorem}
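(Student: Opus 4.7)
The plan is to prove the two non–Gerber–Shiu identities \eqref{twosidedsum2} and \eqref{mainrestwosidedlerl} first as direct consequences of Theorem~\ref{maintheotwosidedth}, and then to tackle the Gerber–Shiu identities \eqref{joinsum} and \eqref{twosidedsum3} via a strong Markov argument at $T_0^-$ combined with a single limit in $b$.

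\textbf{Identities \eqref{twosidedsum2} and \eqref{mainrestwosidedlerl}.} The event $\{\tau_b^+ < \rho^{(p,\lambda)}\}$ coincides with $\{\tau_b^+ < \infty\}\cap\{\me_p > \Occ_{\tau_b^+,\lambda}\}$ by the link used throughout Section~\ref{sectionexamp}. Since $\me_p\sim\mathrm{Exp}(p)$ is independent of $X$ and of the Poisson observation process, integrating it out immediately yields
\[
\e_x\!\left[\me^{-q\tau_b^+}\ind_{\tau_b^+<\rho^{(p,\lambda)}}\right] = \e_x\!\left[\me^{-q\tau_b^+ - p\Occ_{\tau_b^+,\lambda}}\ind_{\tau_b^+<\infty}\right],
\]
and Theorem~\ref{maintheotwosidedth} delivers \eqref{twosidedsum2}. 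For \eqref{mainrestwosidedlerl}, the same conditioning reduces the left-hand side to $\e_x[\me^{-q\tau_b^+ - p\Occ_{\tau_b^+,\lambda}}\ind_{\tau_b^+<\tau_{-a}^-}]$, a two-sided analog of Theorem~\ref{maintheotwosidedth}; I would rerun the proof of that theorem verbatim, replacing $\tau_b^+$ by $\tau_b^+\wedge\tau_{-a}^-$ and invoking the two-sided version of the one-sided Laplace identity for $(T_0^-, X_{T_0^-})$ from the appendix, in which the second-generation scale functions $\mathcal{W}_x^{(q,p)}$ and $\mathcal{W}_x^{(q,\lambda)}$ replace $Z_q(\cdot,\Phi_{p+q})$ and $Z_q(\cdot,\Phi_{\lambda+q})$, and which recombine into $\tilde W_q^{(p,\lambda)}$.

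\textbf{Identity \eqref{joinsum}.} Let $h(x)=\e_x[\me^{-q\rho^{(p,\lambda)}+\theta X_{\rho^{(p,\lambda)}}}\ind_{\rho^{(p,\lambda)}<\tau_b^+}]$. The essential observation is that once the $\me_\lambda$-clock has fired inside an excursion below $0$ (which happens precisely at $T_0^-$), the residual time to ruin in that excursion is governed by a fresh independent $\me_p$-clock; if the excursion ends first, the process resets at $0$ and one is back to evaluating $h(0)$. Strong Markov at $T_0^-\wedge\tau_b^+$ then gives, for $0\leq x\leq b$,
\[
h(x) = \e_x\!\left[\me^{-qT_0^-}G(X_{T_0^-})\ind_{T_0^-<\tau_b^+}\right], \quad G(y) = \e_y\!\left[\me^{-q\me_p+\theta X_{\me_p}}\ind_{\me_p<\tau_0^+}\right] + \me^{\Phi_{p+q}y}h(0).
\]
The first term in $G$ evaluates, via the $(p+q)$-resolvent of $X$ killed at $\tau_0^+$, to $p(\me^{\Phi_{p+q}y}-\me^{\theta y})/\psi_{p+q}(\theta)$. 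Substituting back, $h(x)$ becomes an affine combination of the appendix identities for $\e_x[\me^{-qT_0^-+\alpha X_{T_0^-}}\ind_{T_0^-<\tau_b^+}]$ at $\alpha=\Phi_{p+q}$ and $\alpha=\theta$. Setting $x=0$ to solve for $h(0)$, and then using $\psi_q(\Phi_{\lambda+q})=\lambda$ and $\psi_q(\Phi_{p+q})=p$, collapses the result into the compact form involving $\mathcal{E}^{(q,\lambda)}$ and $\tilde Z_q$ stated in \eqref{joinsum}.

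\textbf{Identity \eqref{twosidedsum3} and main obstacle.} To obtain \eqref{twosidedsum3}, I let $b\to\infty$ in \eqref{joinsum}. Using \eqref{CM1} one checks that $Z_q(b,\alpha)$ grows like $\me^{\Phi_q b}$ for any $\alpha>\Phi_q$, so both $\mathcal{E}^{(q,\lambda)}(b,\theta)$ and $\tilde Z_q(b,\Phi_{\lambda+q},\Phi_{p+q})$ share this dominant exponential factor; their ratio has the finite limit $\psi_q(\theta)(\Phi_{\lambda+q}-\theta)(\Phi_{p+q}-\Phi_q)/[p(\theta-\Phi_q)]$, which inserted into \eqref{joinsum} yields exactly \eqref{twosidedsum3}. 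The main obstacle is the bookkeeping in \eqref{joinsum}: correctly pinning down $G$ hinges on the memoryless property of both the $\me_p$ delay and the Poisson observation scheme, and the final algebraic collapse of the affine combination of appendix identities into the pair $(\mathcal{E}^{(q,\lambda)},\tilde Z_q)$ is somewhat delicate; the remaining three identities are then either immediate or reduce to routine limits.
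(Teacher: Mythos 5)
Your proposal is correct and follows essentially the same route as the paper: a strong Markov decomposition at $T_{0}^{-}$ combined with the excursion/resolvent computation $\mathbb{E}_{y}\left[ \mathrm{e}^{-q\mathrm{e}_{p}+\theta X_{\mathrm{e}_{p}}}\mathbf{1}_{\left\{ \mathrm{e}_{p}<\tau _{0}^{+}\right\} }\right] =p\left( \mathrm{e}^{\Phi _{p+q}y}-\mathrm{e}^{\theta y}\right) /\psi _{p+q}\left( \theta \right) $ for the joint transform, solving at $x=0$, the same $b\rightarrow \infty $ ratio asymptotics for the infinite-horizon version, and the appendix identities for the two-sided exit. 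The only cosmetic difference is that you derive \eqref{twosidedsum2} directly from Theorem \ref{maintheotwosidedth} via the occupation-time link rather than by letting $a\rightarrow \infty $ in \eqref{mainrestwosidedlerl}, an alternative the paper itself points out at the end of its proof.
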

\begin{proof}
Let 
$$v\left(x\right)=\mathbb{E}_{x}\left[ \mathrm{e}^{-q\rho ^{\left( p,\lambda \right) }+\theta X_{\rho ^{\left( p,\lambda \right) }}}\mathbf{%
1}_{\left\{ \rho ^{\left( p,\lambda \right) }<\tau _{b}^{+}\right\} }\right].$$
For $x<0$, by the strong Markov property and the spectral negativity of $X$, we have 
\begin{equation}\label{xnegsum}
v\left(x\right)=\mathbb{E}_{x}\left[ \mathrm{e}^{-q\mathrm{e}_{p}+\theta X_{\me_p}}\mathbf{1}_{\left\{ \tau
_{0}^{+}>\mathrm{e}_{p}\right\} }\right] +\mathbb{E}_{x}\left[ \mathrm{e}%
^{-\left( p+q\right) \tau _{0}^{+}}\right] \mathbb{E}\left[ \mathrm{e}%
^{-q\rho ^{\left( p,\lambda \right) }}\mathbf{1}_{\left\{ \rho ^{\left(
p,\lambda \right) }<\tau _{b}^{+}\right\} }\right] .
\end{equation}%
For $0\leq x< b,$  using again the strong Markov property, we obtain 
\begin{equation}\label{xposum}
v\left(x\right)=\mathbb{E}_{x}\left[ \mathrm{e}^{-qT_{0}^{-}}v\left(X_{T_{0}^{-}} \right) \mathbf{1}%
_{\left\{ T_{0}^{-}<\tau^{+}_b \right\} }\right].
\end{equation}
Plugging \eqref{xnegsum} in \eqref{xposum}, we obtain, for all $x\in \reals$
\begin{eqnarray}\label{xalls}
v\left(x\right)
&=&\mathbb{E}_{x}\left[ \mathrm{e}^{-qT_{0}^{-}}\mathbb{E}_{X_{T_{0}^{-}}}%
\left[ \mathrm{e}^{-p\mathrm{e}_{q}+\theta X_{\me_p}}\mathbf{1}_{\left\{ \tau _{0}^{+}>% 
\mathrm{e}_{q}\right\} }\right] \mathbf{1}_{\left\{ T_{0}^{-}<\tau
_{b}^{+}\right\} }\right]  \notag \\
&&+\mathbb{E}_{x}\left[ \mathrm{e}^{-qT_{0}^{-}}\mathbb{E}_{X_{T_{0}^{-}}}%
\left[ \mathrm{e}^{-\left( p+q\right) \tau _{0}^{+}}\right] \mathbf{1}%
_{\left\{ T_{0}^{-}<\tau _{b}^{+}\right\} }\right] \mathbb{E}\left[ \mathrm{e%
}^{-q\rho ^{\left( p,\lambda \right) }}\mathbf{1}_{\left\{ \rho ^{\left(
p,\lambda \right) }<\tau _{b}^{+}\right\} }\right]\notag \\
&=&\frac{-p}{\psi_{p+q}\left(\theta\right)}\mathbb{E}_{x}\left[ \mathrm{e}^{-qT_{0}^{-}+\theta X_{T_{0}^{-}}}\mathbf{1%
}_{\left\{ T_{0}^{-}<\tau _{b}^{+}\right\} }\right]\notag\\
&&+\frac{p}{\psi_{p+q}\left(\theta\right)}\mathbb{E}_{x}\left[ 
\mathrm{e}^{-qT_{0}^{-}}\mathrm{e}^{\Phi _{p+q}X_{T_{0}^{-}}}\mathbf{1}%
_{\left\{ T_{0}^{-}<\tau _{b}^{+}\right\} }\right]  \notag\\
&&+\mathbb{E}_{x}\left[ \mathbb{E}_{X_{T_{0}^{-}}}\left[ \mathrm{e}^{-\left(
p+q\right) \tau _{0}^{+}}\right] \mathbf{1}_{\left\{ T_{0}^{-}<\tau
_{b}^{+}\right\} }\right] \mathbb{E}\left[ \mathrm{e}^{-q\rho ^{\left(
p,\lambda \right) }}\mathbf{1}_{\left\{ \rho ^{\left( p,\lambda \right)
}<\tau _{b}^{+}\right\} }\right].
\end{eqnarray}%
For $x=0$ and using \eqref{jointLap1b}, we have 
\begin{eqnarray*}
v\left(0\right) &=&\frac{\frac{-p}{\psi_{p+q}\left(\theta\right)}\left( \mathbb{E}\left[ \mathrm{e}^{-qT_{0}^{-}+\theta X_{T_{0}^{-}}}%
\mathbf{1}_{\left\{ T_{0}^{-}<\tau _{b}^{+}\right\} }\right] -\mathbb{E}%
\left[ \mathrm{e}^{-qT_{0}^{-}+\Phi _{p+q}X_{T_{0}^{-}}}\mathbf{1}_{\left\{
T_{0}^{-}<\tau _{b}^{+}\right\} }\right] \right) }{1-\mathbb{E}\left[ 
\mathrm{e}^{-qT_{0}^{-}+\Phi _{p+q}X_{T_{0}^{-}}}\mathbf{1}_{\left\{
T_{0}^{-}<\tau _{b}^{+}\right\} }\right] } \\
&=&\frac{\frac{-p}{\psi_{p+q}\left(\theta\right))}\left( \dfrac{-\lambda }{\psi_{\lambda+q}\left(\theta\right)}\left( 1-\frac{Z_{q}\left(b,\theta \right) }{Z_{q}\left( b,\Phi _{q+\lambda }\right) }\right) -%
\dfrac{\lambda }{\lambda -p}\left( 1-\frac{Z_{q}\left( b,\Phi _{p+q}\right) 
}{Z_{q}\left( b,\Phi _{q+\lambda }\right) }\right) \right) }{1-\dfrac{%
\lambda }{\lambda -p}\left( 1-\frac{Z_{q}\left( b,\Phi _{p+q}\right) }{%
Z_{q}\left( b,\Phi _{q+\lambda }\right) }\right) } \\
&=&\frac{-p}{\psi_{p+q}\left(\theta\right)}\left( 1+\frac{\left( \lambda -p\right) }{\psi_{q+\lambda}\left(\theta\right) \left( \Phi _{\lambda +q}-\Phi _{p+q}\right) }\frac{ \mathcal{E}^{\left( q,\lambda \right) }\left( b,\theta \right) }{%
\tilde{Z}_{q}\left( b,\Phi _{\lambda +q},\Phi _{p+q}\right) }\right).
\end{eqnarray*}
Now, plugging the last expression in \eqref{xalls} combined with %
\eqref{jointLap1b}, we have, after few manipulations, 
\begin{eqnarray*}
v\left(x\right) 
&=&\frac{p}{\psi_{p+q}\left(\theta\right)}\dfrac{\lambda }{\psi_{\lambda +q}\left(\theta\right)}\left( Z_{q}\left( x,\theta\right)
-Z_{q}\left( x,\Phi _{\lambda +q}\right) \frac{Z_{q}\left( b,\theta\right) }{%
Z_{q}\left( b,\Phi _{\lambda +q}\right) }\right)  \\
&&+\frac{p}{\psi_{p+q}\left(\theta\right)}\dfrac{\lambda }{\psi_{\lambda +q}\left(\theta\right)}
\frac{Z_{q}\left( x,\Phi _{\lambda +q}\right) Z_{q}\left( b,\Phi
_{p+q}\right) }{Z_{q}\left( b,\Phi _{\lambda +q}\right) }  \\
&&\times \frac{\lambda
Z_{q}\left( b,\theta\right) }{\left( \Phi _{\lambda +q}-\Phi _{p+q}\right) \tilde{Z}%
_{q}\left( b,\Phi _{\lambda +q},\Phi _{p+q}\right) } \\
&&+\frac{p}{\psi_{p+q}\left(\theta\right)}\dfrac{\lambda }{\psi_{\lambda +q}\left(\theta\right)}
\frac{qZ_{q}\left( b,\Phi _{p+q}\right) Z_{q}\left( x,\Phi _{\lambda
+q}\right) }{\left( \Phi _{\lambda +q}-\Phi _{p+q}\right) \tilde{Z}%
_{q}\left( b,\Phi _{\lambda +q},\Phi _{p+q}\right) } \\
&=&\frac{p}{\psi_{p+q}\left(\theta\right)\psi_{\lambda +q}\left(\theta\right)}\left(
\lambda Z_q(x,\theta)-\psi_q\left(\theta\right) Z_q(x,\Phi_{\lambda+q})\right)
 \\
&&-\frac{p}{\psi_{p+q}\left(\theta\right)\psi_{\lambda +q}\left(\theta\right)}\dfrac{\tilde{Z}%
_{q}\left( x,\Phi _{\lambda +q},\Phi _{p+q}\right)}{\tilde{Z}%
_{q}\left( b,\Phi _{\lambda +q},\Phi _{p+q}\right)}\left(\lambda Z_q(b,\theta)-\psi_q\left(\theta\right) Z_q(b,\Phi_{\lambda+q})\right),
\end{eqnarray*}
which proves the first identity. To prove the second identity, we need compute the following limit
\begin{equation*}
\lim_{b\rightarrow \infty}\dfrac{\mathcal{E}^{(q,\lambda)}(b,\theta)}{\tilde{Z}_q\left(b,\Phi_{\lambda+q},\Phi_{p+q}  \right)}.
\end{equation*}
Given that
$$\lim_{b\rightarrow \infty}\dfrac{Z_q\left(b,\Phi_{\lambda+q} \right)}{Z_q\left(b  ,\theta \right)}=\dfrac{\lambda \left(\theta -\Phi_q\right)}{\psi_q\left(\theta\right)(\Phi_{\lambda+q}-\Phi_{q})} ,$$
and
$$\lim_{b\rightarrow \infty}\dfrac{\tilde{Z}_q\left(b,\Phi_{\lambda+q},\Phi_{p+q}  \right)}{Z_q\left(b,\theta  \right)}=\dfrac{\lambda p\left( \theta- \Phi_q\right)}{ \psi_q\left( \theta\right) (\Phi_{\lambda+q}-\Phi_{q})(\Phi_{p+q}-\Phi_{q})},$$
We obtain 
\begin{equation*}
\lim_{b\rightarrow \infty}\dfrac{\mathcal{E}^{(q,\lambda)}(b,\theta)}{\tilde{Z}_q\left(b,\Phi_{\lambda+q},\Phi_{p+q}  \right)}=\lim_{b\rightarrow \infty}\dfrac{\mathcal{E}^{(q,\lambda)}(b,\theta)/Z_q(b,\theta)}{\tilde{Z}_q\left(b,\Phi_{\lambda+q},\Phi_{p+q}  \right)/Z_q(b,\theta)}=\dfrac{\psi _{q}\left( \theta \right)(\Phi _{q+\lambda }-\theta )\left( \Phi _{p+q}-\Phi _{q}\right) }{p \left(\theta-\Phi_{q}\right)}.
\end{equation*}
Now, we prove identity in Equation \eqref{mainrestwosidedlerl}. We set
\begin{equation*}
w\left( x\right) =\e_{x}\left[\me^{-q\tau^{+}_b}\ind_{\left\lbrace \tau^{+}_b<\rho^{(p,\lambda)}\wedge \tau^{-}_{-a} \right\rbrace} \right]
\end{equation*}%
For $-a<x<0$, by the strong Markov property and the spectral negativity of $%
X$, we have 
\begin{equation}\label{xneg}
w\left( x\right) =\mathbb{E}_{x}\left[ \mathrm{e}^{-\left( p+q\right) \tau
_{0}^{+}}\mathbf{1}_{\left\{ \tau _{0}^{+}<\tau _{-a}^{-}\right\} }\right]
w\left( 0\right) =\frac{W_{p+q}\left( x+a\right) }{W_{p+q}\left( a\right) }%
w\left( 0\right).
\end{equation}%
For $0\leq x\leq b$, using again the strong Markov property, we obtain 
\begin{equation}\label{allx1}
w\left( x\right) =\mathbb{E}_{x}\left[ \mathrm{e}^{-q\tau _{b}^{+}}\mathbf{1}%
_{\left\{ \tau _{b}^{+}<T_{0}^{-}\wedge \tau_{-a}^{-} \right\} }\right] +\mathbb{E}_{x}\left[ 
\mathrm{e}^{-qT_{0}^{-}}w\left( X_{T_{0}^{-}}\right) \mathbf{1}_{\left\{
T_{0}^{-}<\tau _{b}^{+}\wedge \tau_{-a}^{-} \right\} }\right] .  
\end{equation}
Hence, plugging \eqref{xneg} in \eqref{allx1}, we deduce
% for all $x\in \mathbb{R}$
\begin{equation*}
w\left( x\right) =\mathbb{E}_{x}\left[ \mathrm{e}^{-q\tau _{b}^{+}}\mathbf{1}%
_{\left\{ \tau _{b}^{+}<T_{0}^{-}\wedge \tau_{-a}^{-}  \right\} }\right] +\frac{w\left( 0\right) }{%
W_{p+q}\left( a\right) }\mathbb{E}_{x}\left[ \mathrm{e}^{-qT_{0}^{-}}W_{p+q}%
\left( X_{T_{0}^{-}}+a\right) \mathbf{1}_{\left\{ T_{0}^{-}<\tau
_{b}^{+} \wedge \tau_{-a}^{-} \right\} }\right] .
\end{equation*}%
For $x=0$, using \eqref{jointLap1sda} and  \eqref{LL1}, we have 
\begin{eqnarray*}
w\left( 0\right) &=&\frac{\mathbb{E}\left[ \mathrm{e}^{-q\tau _{b}^{+}}%
\mathbf{1}_{\left\{ \tau _{b}^{+}<T_{0}^{-}\wedge \tau_{-a}^{-} \right\} }\right] }{1-\mathbb{E}%
\left[ \mathrm{e}^{-qT_{0}^{-}}W_{p+q}\left( X_{T_{0}^{-}}+a\right) \mathbf{1%
}_{\left\{ T_{0}^{-}<\tau _{b}^{+}\wedge \tau_{-a}^{-} \right\} }\right]/W_{p+q}\left(a\right) } \\
%&=&\frac{W_{q+\lambda}(a)}{\mathcal{W}_{b}^{\left( q,\lambda \right) }\left( b+a\right)-\dfrac{\lambda }{\lambda -p}\left(\mathcal{W}_{b}^{\left( q,\lambda \right) }\left( b+a\right) \left(1 - \dfrac{W_{q+\lambda }\left( a\right)}{W_{p+q}\left( a\right)} \right) - \dfrac{W_{q+\lambda}(a)\left( \mathcal{W}_{b}^{\left( q,p\right) }\left( b+a\right) -\mathcal{W}_{b}^{\left( q,\lambda \right) }\left( b+a\right) \right)}{W_{p+q }\left( a\right)} \right) } \\
&=&\frac{\left( \lambda -p\right)W_{p+q}\left(a\right)  W_{q+\lambda}\left(a\right)}{\lambda \tilde{W}^{(p,\lambda)}_{q}\left( b\right) }.
\end{eqnarray*}%
Now, plugging the last expression in \eqref{allx1}, we deduce 
%combined with \eqref{jointLap1b} and \eqref{LL1}, we have 
\begin{eqnarray*}
w\left(x\right) &=&\frac{\mathcal{W}_{x}^{\left(q,\lambda\right) }\left(x+a\right) }{\mathcal{W}_{b}^{\left(q,\lambda\right) }\left(b+a\right) }+\frac{ W_{q+\lambda}\left(a\right)\left( \lambda -p\right) }{\lambda\tilde{W}^{(p,\lambda)}_{q}\left( b\right) }\dfrac{\lambda }{\lambda -p}\left( \mathcal{W}_{x}^{\left( q,p\right) }\left( x+a\right) -\mathcal{W}_{x}^{\left( q,\lambda \right) }\left( x+a\right) \right)  \\
&&-\frac{\left(\lambda-p\right)  W_{q+\lambda}\left(a\right)}{\lambda\tilde{W}^{(p,\lambda)}_{q}\left( b\right)} \dfrac{\lambda }{\lambda -p}\left( \mathcal{W}%
_{b}^{\left( q,p\right) }\left( b+a\right) -\mathcal{W}_{b}^{\left(
q,\lambda \right) }\left( b+a\right) \right) \frac{\mathcal{W}_{x}^{\left(q,\lambda\right) }\left(x+a\right) }{\mathcal{W}_{b}^{\left(q,\lambda\right) }\left(b+a\right)} \\
&=&\frac{\tilde{W}^{(p,\lambda)}_{q}\left( x\right) }{%
\tilde{W}^{(p,\lambda)}_{q}\left( b\right) },
\end{eqnarray*}%
which concludes the proof for the third identity.
%It can also be proved using the following link with identity \eqref{mainrestwosidedl}, that is,
%\begin{equation*}
%\e_{x}\left[\me^{-q\tau^{+}_b}\ind_{\left\lbrace \tau^{+}_b<\rho^{(p,\lambda)}\wedge \tau^{-}_{-a} \right\rbrace} \right]=\mathbb{E}_{x}\left[ \mathrm{e}^{-q\tau_{b}^{+}-p\Occ_{\tau_{b}^{+},\lambda }}\ind_{\left\lbrace \tau_{b}^{+}<\tau^{-}_{-a} \right\rbrace}\right].
%\end{equation*}
Now, taking $a\rightarrow \infty$ in \eqref{mainrestwosidedlerl}, we have
\begin{equation*}
\lim_{a\rightarrow \infty }\dfrac{\tilde{W}_{q}^{\left( p,\lambda \right) }\left( x,a\right)}{\tilde{W}_{q}^{\left( p,\lambda \right) }\left( b,a\right)}=\lim_{a\rightarrow \infty }\frac{\tilde{W}_{q}^{\left( p,\lambda \right) }\left( x,a\right)/\left( W_{p+q}\left( a\right)W_{q+\lambda}\left( a\right)\right) }{\tilde{W}_{q}^{\left( p,\lambda \right) }\left( b,a\right)/\left( W_{p+q}\left( a\right)W_{q+\lambda}\left( a\right)\right) },
\end{equation*}
and
\begin{eqnarray*}
\lim_{a\rightarrow \infty }\dfrac{\tilde{W}_{q}^{\left( p,\lambda \right) }\left( x,a\right)}{W_{p+q}\left( a\right)W_{q+\lambda}\left( a\right)}&=&\lim_{a\rightarrow \infty }\dfrac{\lambda \mathcal{W}_{x}^{\left(q,p\right) }\left(x+a\right)W_{q+\lambda}(a)-p\mathcal{W}_{x}^{\left(q,\lambda\right) }\left(x+a\right)W_{p+q}(a)}{W_{p+q}\left( a\right)W_{q+\lambda}\left( a\right)}\\&=&\lambda Z_q\left(x,\Phi_{p+q}\right)-pZ_q\left(x,\Phi_{q+\lambda}  \right),
\end{eqnarray*}
where in the last equality we used the fact that, 
$$\lim_{a\rightarrow \infty } \dfrac{\mathcal{W}_{x}^{\left(q,p\right) }\left(x+a\right)}{W_{p+q}(a)}=\dfrac{W_{p+q}\left(x+a\right)-p\int_{0}^{x}W_{p+q}\left( x+a-y\right) W_{q}\left( y\right) \md y,}{W_{p+q}(a)}=Z_q\left(x,\Phi_{p+q}\right),$$
and similarly,
$$\lim_{a\rightarrow \infty } \dfrac{\mathcal{W}_{x}^{\left(q,\lambda\right) }\left(x+a\right)}{W_{q+\lambda}(a)}=Z_q\left(x,\Phi_{q+\lambda}\right),$$
\end{proof}
which both follow using \eqref{CM1}. The last identity can also be proved using the fact that
\begin{equation*}
\e_{x}\left[\me^{-q\tau^{+}_b}\ind_{\left\lbrace \tau^{+}_b<\rho^{(p,\lambda)} \right\rbrace} \right]=\mathbb{E}_{x}\left[ \mathrm{e}^{-q\tau_{b}^{+}-p\Occ_{\tau_{b}^{+},\lambda }}\ind_{\left\lbrace \tau_{b}^{+}<\infty \right\rbrace}\right].
\end{equation*}
 This ends the proof.
\begin{remark}
As it was shown in subsection \eqref{sect:discussion}, we have
\begin{equation*}
\lim_{p \rightarrow \infty}\dfrac{\tilde{Z}_q\left(x,\Phi_{\lambda+q},\Phi_{p+q}  \right)}{\tilde{Z}_q\left(b,\Phi_{\lambda+q},\Phi_{p+q}  \right)}=\dfrac{Z_q\left(x,\Phi_{\lambda+q}  \right)}{Z_q\left(b,\Phi_{\lambda+q}  \right)},
\end{equation*}
Thus, 
\begin{eqnarray*}
\lim_{p\rightarrow \infty }\mathbb{E}_{x}\left[ \mathrm{e}^{-q\rho ^{\left(
p,\lambda \right) }+\theta X_{\rho ^{\left( p,\lambda \right) }}}\mathbf{1}%
_{\left\{ \rho ^{\left( p,\lambda \right) }<\tau _{b}^{+}\right\} }\right] 
&=&\dfrac{\lambda }{\lambda -\psi _{q}(\theta )}\left( \mathcal{E}^{\left(
q,\lambda \right) }\left( x,\theta \right) -\dfrac{Z_q\left( x,\Phi _{\lambda+q
}\right) }{Z_q\left( b,\Phi _{\lambda+q }\right) }\mathcal{E}^{\left( q,\lambda
\right) }\left( b,\theta \right) \right)  \\
&=&\dfrac{\lambda }{\lambda -\psi _{q}(\theta )}\left( Z_{q}\left( x,\theta
\right) -Z_{q}\left( x,\Phi _{\lambda +q}\right) \dfrac{Z_{q}\left( b,\theta
\right) }{Z_{q}\left( b,\Phi _{\lambda +q}\right) }\right)  \\
&=&\mathbb{E}_{x}\left[ \mathrm{e}^{-qT_{0}^{-}+\theta X_{T_{0}^{-}}}\mathbf{%
1}_{\left\{ T_{0}^{-}<\tau _{b}^{+}\right\} }\right],
\end{eqnarray*}
which corresponds to identity \eqref{jointLap1b}. 
\end{remark}
%which will generalize its continuously-observed analogue inThe next theorem gives the Gerber–Shiu distribution at Parisian ruin $\rho ^{\left( p,\lambda \right) }$. 
Next is an expression of the Gerber–Shiu distribution at the Parisian ruin time $\rho ^{\left( p,\lambda \right) }$ .
\begin{theorem}\label{GSSTH}
For $p,\lambda>0$, $q\geq 0$, $x \leq b$ and $y\leq 0$,
\begin{equation}\label{GSS}
\mathbb{E}_{x}\left[ \mathrm{e}^{-q\rho ^{\left( p,\lambda \right)
}}\mathbf{%
1}_{\left\{X_{\rho ^{\left( p,\lambda \right) }}\in \mathrm{d}y\mathbf{,}\rho
^{\left( p,\lambda \right) }<\tau _{b}^{+}\right\} } \right] =p\left( \mathcal{E}_{y}^{\left( q,\lambda \right) }\left( x\right) -\frac{\tilde{Z}_{q}\left(
x,\Phi _{\lambda +q},\Phi _{p+q}\right) }{\tilde{Z}_{q}\left( b,\Phi
_{\lambda +q},\Phi _{p+q}\right) }\mathcal{E}_{y}^{\left( q,\lambda \right)
}\left( b \right) \right)\md y,
\end{equation}
where
\begin{eqnarray*}
\mathcal{E}_{y}^{\left( q,\lambda \right) }\left( x\right)  &=&\lambda
\frac{\mathcal{W}_{x}^{\left( q,\lambda \right) }\left( x-y\right)
-\mathcal{W}_{x}^{\left( q,p\right) }\left( x-y\right) }{\lambda -p} \\
&&-Z_{q}\left( x,\Phi _{\lambda +q}\right) \left(\dfrac{\lambda W_{q+\lambda }(-y) -pW_{p+q}\left( -y\right)}{\lambda-p}\right) .
\end{eqnarray*}
\end{theorem}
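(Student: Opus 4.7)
The plan is to obtain the stated density by Laplace-inverting the identity \eqref{joinsum} in the variable $\theta$. Since $X$ has no positive jumps, $X_{\rho^{(p,\lambda)}}\leq 0$ almost surely on $\{\rho^{(p,\lambda)}<\tau_b^+\}$, so after the change of variable $z=-y$ the left-hand side of \eqref{joinsum} is an ordinary one-sided Laplace transform of the sub-measure $\e_x[\me^{-q\rho^{(p,\lambda)}};X_{\rho^{(p,\lambda)}}\in\md y,\rho^{(p,\lambda)}<\tau_b^+]$, which is supported on $(-\infty,0]$. By uniqueness of Laplace transforms, it will be enough to exhibit a function on $(-\infty,0]$ whose Laplace transform matches the right-hand side of \eqref{joinsum} for $\theta$ large, and that function will automatically be the required density.

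First I would expand $\mathcal{E}^{(q,\lambda)}(\cdot,\theta)=\lambda Z_q(\cdot,\theta)-\psi_q(\theta)Z_q(\cdot,\Phi_{\lambda+q})$, which reduces the right-hand side of \eqref{joinsum} to a $\theta$-independent linear combination of two kinds of rational expressions in $\theta$, namely $\frac{Z_q(\cdot,\theta)}{\psi_{q+\lambda}(\theta)\psi_{q+p}(\theta)}$ and $\frac{\psi_q(\theta)}{\psi_{q+\lambda}(\theta)\psi_{q+p}(\theta)}$, each evaluated at $x$ and at $b$. For the first kind I would use the partial-fraction identity
\begin{equation*}
\frac{1}{\psi_{q+\lambda}(\theta)\psi_{q+p}(\theta)}=\frac{1}{\lambda-p}\left(\frac{1}{\psi_{q+\lambda}(\theta)}-\frac{1}{\psi_{q+p}(\theta)}\right)
\end{equation*}
together with \eqref{LapscriptW}, which rewrites $\frac{Z_q(x,\theta)}{\psi_{q+s}(\theta)}$ as $\int_0^\infty\me^{-\theta z}\mathcal{W}_x^{(q,s)}(x+z)\md z$ for $\theta>\Phi_{q+s}$. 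Substituting $y=-z$ then identifies $\frac{p\lambda Z_q(x,\theta)}{\psi_{q+\lambda}(\theta)\psi_{q+p}(\theta)}$ as the Laplace transform of $\frac{p\lambda}{\lambda-p}\bigl(\mathcal{W}_x^{(q,\lambda)}(x-y)-\mathcal{W}_x^{(q,p)}(x-y)\bigr)$ on $(-\infty,0]$.

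For the second kind, using $\psi_q(\theta)=\psi_{q+\lambda}(\theta)+\lambda=\psi_{q+p}(\theta)+p$ yields the companion decomposition
\begin{equation*}
\frac{\psi_q(\theta)}{\psi_{q+\lambda}(\theta)\psi_{q+p}(\theta)}=\frac{1}{\lambda-p}\left(\frac{\lambda}{\psi_{q+\lambda}(\theta)}-\frac{p}{\psi_{q+p}(\theta)}\right),
\end{equation*}
which, via $\int_0^\infty\me^{-\theta z}W_s(z)\md z=1/\psi_s(\theta)$ and the same change of variables, inverts to $\frac{1}{\lambda-p}\bigl(\lambda W_{q+\lambda}(-y)-pW_{q+p}(-y)\bigr)$ on $(-\infty,0]$. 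Combining the two inversions, pulling the factor $Z_q(x,\Phi_{\lambda+q})$ outside the second piece, and accounting for the overall minus sign coming from $\mathcal{E}^{(q,\lambda)}(x,\theta)$ reproduces exactly $p\,\mathcal{E}_y^{(q,\lambda)}(x)\md y$; carrying out the identical computation with $b$ in place of $x$ yields the subtracted $\mathcal{E}_y^{(q,\lambda)}(b)$ term, and \eqref{GSS} follows.

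I expect the main obstacle to be purely bookkeeping — tracking the $(\lambda-p)$ denominators, the signs, and the $Z_q(x,\Phi_{\lambda+q})$ prefactor so that the sum of the two scale-function inversions is recognized as $p\,\mathcal{E}_y^{(q,\lambda)}(x)$. One also has to ensure that the range $\theta>\Phi_{q+\lambda}\vee\Phi_{q+p}$ required by \eqref{LapscriptW} is respected, but this causes no difficulty since uniqueness of Laplace transforms only requires identification on a half-line. Finally, the apparent singularity at $\lambda=p$ in the partial fractions is removable and can be handled at the end by a continuity argument.
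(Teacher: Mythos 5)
Your proposal is correct and follows essentially the same route as the paper: Laplace inversion of \eqref{joinsum} in $\theta$, identifying the inverses of $\psi_q(\theta)/\bigl(\psi_{q+\lambda}(\theta)\psi_{q+p}(\theta)\bigr)$ and $Z_q(\cdot,\theta)/\bigl(\psi_{q+\lambda}(\theta)\psi_{q+p}(\theta)\bigr)$ as the two scale-function combinations appearing in $\mathcal{E}_y^{(q,\lambda)}$. The only (cosmetic) difference is that you reach these inverses by partial fractions in $1/\psi_{q+\lambda}$ and $1/\psi_{q+p}$ combined with \eqref{LapscriptW}, whereas the paper first writes them as convolutions and then collapses those via \eqref{conveq} and \eqref{convolution2}; the end result is identical.
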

\begin{proof}
We have
\begin{equation}
\mathbb{E}_{x}\left[ \mathrm{e}^{-q\rho ^{\left( p,\lambda \right) }+\theta
X_{\rho ^{\left( p,\lambda \right) }}}\mathbf{1}_{\left\{ \rho ^{\left(
p,\lambda \right) }<\tau _{b}^{+}\right\} }\right]=\int^{0}_{-\infty}\me^{\theta y}  \mathbb{E}_{x}\left[ \mathrm{e}^{-q\rho ^{\left( p,\lambda \right)
}}\mathbf{%
1}_{\left\{X_{\rho ^{\left( p,\lambda \right) }}\in \mathrm{d}y\mathbf{,}\rho
^{\left( p,\lambda \right) }<\tau _{b}^{+}\right\} }\right].
\end{equation}
More precisely, we need to compute the Laplace inverse of $\frac{\mathcal{E}^{\left( q,\lambda \right) }\left( x,\theta \right)}{\psi_{q+\lambda }\left( \theta \right) \psi _{q+p}\left( \theta \right) }$ with respect to $\theta$. First, from \eqref{def_scale}, we have
\begin{eqnarray*}
\frac{\psi _{q}\left( \theta \right) }{\psi _{q+\lambda }\left( \theta
\right) \psi _{q+p}\left( \theta \right) } &=&\frac{1}{\psi _{q+p}\left(
\theta \right) }+\frac{\lambda }{\psi _{q+\lambda }\left( \theta \right)
\psi _{q+p}\left( \theta \right) } \\
&=&\int_{-\infty}^{0 }\mathrm{e}^{\theta y}\left( W_{p+q}\left( -y\right)
+\lambda \int_{0}^{-y}W_{q+\lambda }\left( -y-z\right) W_{p+q}\left(
z\right) \mathrm{d}z\right) \mathrm{d}y\\
&=&\int_{-\infty}^{0}\mathrm{e}^{\theta y}\left(\dfrac{\lambda W_{q+\lambda }(-y) -pW_{p+q}\left( -y\right)}{\lambda-p} \right) \mathrm{d}y,
\end{eqnarray*}
where the last equality follows using Equation \eqref{conveq}. Using \eqref{convolution2} and \eqref{LapscriptW}, we deduce
\begin{multline*}
\frac{Z_{q}\left( x,\theta \right) }{\psi _{q+\lambda }\left( \theta \right)
\psi _{q+p}\left( \theta \right) }=\int_{-\infty}^{0 }\mathrm{e}^{\theta
y}\left( \int_{0}^{-y}W_{q+\lambda }\left( -y-z\right) \mathcal{W}%
_{x}^{\left( q,p \right) }\left( x+z\right) \mathrm{d}z\right) \mathrm{d}y \\
=\int_{-\infty}^{0 }\mathrm{e}^{\theta
y}\left( \frac{\mathcal{W}_{x}^{\left( q,\lambda \right) }\left( x-y\right)
-\mathcal{W}_{x}^{\left( q,p\right) }\left( x-y\right) }{\lambda -p} \right) \mathrm{d}y.
\end{multline*}
The desired expression follows by Laplace inversion.
\end{proof}
\begin{remark}
The Gerber–Shiu distribution in Equation \eqref{GSS} has similar a structure as the one in \cite{baurdoux_et_al_2015} (although it is hard to show the convergence when $p\rightarrow \infty$), that is, 
\begin{eqnarray}\label{G-S-Parisian}
\e_{x}\left[\me^{-qT_{0}^{-}}
\mathbf{1}_{\left\{X_{T_{0}^{-}}\in \md y,T_{0}^{-}<\tau
_{b}^{+}\right\} } \right] =\lambda \left( \frac{Z_{q}\left( x,\Phi _{\lambda+q}\right) }{Z_{q}\left(b,\Phi _{\lambda+q}\right) }\mathcal{W}_{b}^{\left(
q,\lambda \right) }\left( b-y\right) -\mathcal{W}_{x}^{\left(q,\lambda
\right) }\left( x-y\right) \right)\md y.
\end{eqnarray}
%However, the convergence is difficult to prove.
\end{remark}
Using the relationship between the Parisian time $\rho^{(p,\lambda)}$ and Poissonian occupation time, we obtain the following expression for the Laplace transform of $\Occ_{\me_{q},\lambda}$ where $\me_{q}$ is an exponential random variable with rate $q>0$ that is independent of the process $X$.
\begin{corol}
For $p\geq 0$, $q,\lambda>0$ and $x \in \reals$, we have
\begin{equation}\label{uptoexp}
\mathbb{E}_{x}\left[\me^{-p\Occ_{\me_q,\lambda }}\right]=1-\frac{p}{\left( \lambda+q \right)\left(p+q \right)}
\left(\mathcal{E}^{(q,\lambda)}(x,0) -\dfrac{q\Phi _{q+\lambda }\left( \Phi _{p+q}-\Phi _{q}\right) }{p \Phi_{q}}\tilde{Z}_q\left(x,\Phi_{\lambda+q},\Phi_{p+q}  \right) \right).
\end{equation}
\end{corol}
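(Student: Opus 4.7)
The plan is to exploit the Parisian ruin interpretation of the Laplace transform established immediately before Corollary~\ref{parruinerrl}, combined with identity~\eqref{twosidedsum3} of Theorem~\ref{forGs} evaluated at $\theta=0$. Recall that in the finite-horizon version of \eqref{linkerlang} we have
\begin{equation*}
\p_x\!\left(\rho^{(p,\lambda)}\leq t\right)=1-\e_{x}\!\left[\me^{-p\Occ_{t,\lambda}}\right].
\end{equation*}
Conditioning on the independent exponential horizon $\me_q$ with rate $q>0$ and integrating, one obtains
\begin{equation*}
\e_{x}\!\left[\me^{-p\Occ_{\me_q,\lambda}}\right]=\int_{0}^{\infty}q\me^{-qt}\e_{x}\!\left[\me^{-p\Occ_{t,\lambda}}\right]\md t=1-\p_{x}\!\left(\rho^{(p,\lambda)}\leq \me_q\right),
\end{equation*}
and the memoryless property yields $\p_{x}(\rho^{(p,\lambda)}\leq \me_q)=\e_{x}[\me^{-q\rho^{(p,\lambda)}}\ind_{\{\rho^{(p,\lambda)}<\infty\}}]$.

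Next, I would specialize identity \eqref{twosidedsum3} to $\theta=0$ to compute precisely this Laplace transform. Since $\psi(0)=0$, the substitutions $\psi_q(0)=-q$, $\psi_{q+\lambda}(0)=-(q+\lambda)$ and $\psi_{q+p}(0)=-(q+p)$ apply, so that $\psi_{q+\lambda}(0)\psi_{q+p}(0)=(q+\lambda)(q+p)$ and the factor $\psi_q(\theta)/(\theta-\Phi_q)$ in the second term evaluates at $\theta=0$ to $-q/(-\Phi_q)=q/\Phi_q$. Substituting these into \eqref{twosidedsum3} gives
\begin{equation*}
\e_{x}\!\left[\me^{-q\rho^{(p,\lambda)}}\ind_{\{\rho^{(p,\lambda)}<\infty\}}\right]=\frac{p}{(\lambda+q)(p+q)}\left(\mathcal{E}^{(q,\lambda)}(x,0)-\dfrac{q\Phi_{q+\lambda}(\Phi_{p+q}-\Phi_q)}{p\Phi_q}\tilde{Z}_q\!\left(x,\Phi_{\lambda+q},\Phi_{p+q}\right)\right).
\end{equation*}
Plugging this into the identity derived in the first step yields exactly \eqref{uptoexp}.

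There are no real obstacles here; the result is essentially a direct corollary once the probabilistic identification $\e_{x}[\me^{-p\Occ_{\me_q,\lambda}}]=1-\e_{x}[\me^{-q\rho^{(p,\lambda)}}\ind_{\{\rho^{(p,\lambda)}<\infty\}}]$ is recognized. The only point that requires mild care is the handling of signs when evaluating the various $\psi$-factors at $\theta=0$, and verifying that the ratio $\psi_q(\theta)/(\theta-\Phi_q)$ remains well defined in the limit (which it does, since both numerator and denominator vanish only when $\theta=\Phi_q$, and we are evaluating at $\theta=0\neq\Phi_q$ thanks to $q>0$).
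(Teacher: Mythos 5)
Your proposal is correct and follows essentially the same route as the paper: identify $\e_{x}[\me^{-p\Occ_{\me_q,\lambda}}]$ with $1-\e_{x}[\me^{-q\rho^{(p,\lambda)}}\ind_{\{\rho^{(p,\lambda)}<\infty\}}]$ via the Parisian-ruin link, then evaluate identity \eqref{twosidedsum3} at $\theta=0$. Your sign bookkeeping ($\psi_{q+\lambda}(0)\psi_{q+p}(0)=(\lambda+q)(p+q)$ and $\psi_q(0)/(0-\Phi_q)=q/\Phi_q$) matches what the paper leaves implicit.
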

\begin{proof}
First, we have
\begin{equation}\label{mainrestwosidedl33}
\mathbb{E}_{x}\left[\me^{-p\Occ_{\me_q,\lambda }}\right]= \mathbb{P}_{x}\left(\Occ_{\me_q,\lambda }<\me_p \right)
=\mathbb{P}_{x}\left(\rho ^{\left( p,\lambda \right) }>\me_q\right)=1-\mathbb{E}_{x}\left[\me^{-q\rho ^{\left( p,\lambda \right) }}\right].
\end{equation}
%\end{theorem}
Then, using \eqref{twosidedsum3} for $\theta=0$, we obtain 
\begin{multline}\label{uptoexp}
\mathbb{E}_{x}\left[\me^{-p\Occ_{\me_q,\lambda }}\right]\\=1-\frac{p}{\left( \lambda+q \right)\left(p+q \right)}
\left(\mathcal{E}^{(q,\lambda)}(x,0) -\dfrac{q\Phi _{q+\lambda }\left( \Phi _{p+q}-\Phi _{q}\right) }{p \Phi_{q}}\tilde{Z}_q\left(x,\Phi_{\lambda+q},\Phi_{p+q}  \right) \right).
\end{multline}
\end{proof}
\begin{remark}
%When $q\rightarrow 0$, we recover the result the Laplace transform in Corollary \eqref{Lapcortb}, that is, 
%\begin{equation*}
%\mathbb{E}_{x}\left[ \mathrm{e}^{-p\Occ_{\infty ,\lambda }}\right]=\lim_{q \rightarrow 0} \mathbb{E}_{x}\left[\me^{-p\Occ_{\me_q,\lambda }}\right]=\mathbb{E}\left[ X_{1}\right] \frac{\Phi _{p}\Phi _{\lambda }}{\lambda p }\tilde{Z}\left(x,\Phi_{\lambda},\Phi_{p}  \right).
%\end{equation*}
When $\lambda\rightarrow \infty$, we recover the Laplace transform of $\Occ_{\me_q}$,
\begin{equation}\label{uptoexp}
\mathbb{E}_{x}\left[\me^{-p\Occ_{\me_q }}\right]=\lim_{\lambda \rightarrow \infty} \mathbb{E}_{x}\left[\me^{-p\Occ_{\me_q,\lambda }}\right] \\=1-\frac{p}{\left(p+q \right)}
\left(Z_{q}(x) -\dfrac{q\left( \Phi _{p+q}-\Phi _{q}\right) }{p \Phi_{q}}Z_q\left(x,\Phi_{p+q} \right) \right),
\end{equation}
which can be found in \cite{lietal2019}.
\end{remark}
%\subsection{Parisian ruin with Erlang$(2,\lambda)$ implementation delays}
As an application of the previous results, we study Parisian ruin with Erlang$(2,\lambda)$ distributed implementation delays denoted by $\rho^{(2)}_\lambda$. This Parisian ruin time can also be expressed as follows
$$\rho^{(2)}_\lambda =T_{\tilde{N}^{-}_0},$$
where
$\tilde{N}^{-}_0=\min\left\lbrace i\in \mathbb{N} : \sup \left\lbrace X_t : t\in \left[ T_{i-1},T_i \right]\right\rbrace<0  \right\rbrace$ (see Albrecher and Ivanovs \cite{albrecher-ivanovs2017}).

Hence, letting $p \rightarrow \lambda$ in Corollary \eqref{parruinerrl}, we obtain the following expression for the probability of Parisian ruin with Erlang$(2,\lambda)$ implementation delays which has a similar structure as the one in Equation~\eqref{Pruine1}.
\begin{corol}
For $\lambda>0$, $x\in  \reals$ and $\e\left[ X_{1}\right]>0$, we have
 \begin{equation} \label{ParErlang}
\p_{x}\left(\rho^{(2)}_\lambda<\infty \right) =1-\e\left[ X_{1}\right] \frac{\Phi^{2} _{\lambda}}{\lambda^{2}}\tilde{Z}\left(x,\Phi_{\lambda},\Phi_{\lambda}  \right).
\end{equation}
 \end{corol}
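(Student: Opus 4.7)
The plan is to obtain \eqref{ParErlang} as the diagonal limit $p\to\lambda$ of the formula \eqref{Parsum} from Corollary~\ref{parruinerrl}. The first observation, which is purely distributional, is that if $\me_p$ and $\me_\lambda$ are independent exponential random variables with the same rate $\lambda>0$, then their sum $\me_\lambda+\me_\lambda'$ is Erlang$(2,\lambda)$. Consequently, the Parisian ruin time $\rho^{(p,\lambda)}$ defined in \eqref{ruinsum} with $p=\lambda$ has exactly the same law as $\rho^{(2)}_\lambda$. So once we justify substituting $p=\lambda$ in \eqref{Parsum} (or equivalently taking a limit), the stated identity \eqref{ParErlang} follows immediately.

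Concretely, I would first invoke Corollary~\ref{parruinerrl} for arbitrary $p\neq\lambda$ to write
\begin{equation*}
\p_{x}\!\left(\rho^{(p,\lambda)}<\infty \right) \;=\; 1-\e\!\left[ X_{1}\right] \frac{\Phi_{\lambda}\Phi _{p}}{\lambda p}\,\tilde{Z}\!\left(x,\Phi_{\lambda},\Phi_{p}\right),
\end{equation*}
and then let $p\to\lambda$ on both sides. On the right-hand side, $p\mapsto\Phi_p$ is continuous (indeed real-analytic) on $(0,\infty)$, and by the very definitions \eqref{eq:ztilde}--\eqref{eq:ztilde2} we have
\begin{equation*}
\lim_{p\to\lambda}\tilde{Z}\!\left(x,\Phi_{\lambda},\Phi_{p}\right)\;=\;\tilde{Z}\!\left(x,\Phi_{\lambda},\Phi_{\lambda}\right)\;=\;\psi'(\Phi_{\lambda})\,Z(x,\Phi_{\lambda})-\lambda\, Z'(x,\Phi_{\lambda}),
\end{equation*}
since $\tilde Z(x,\alpha,\beta)$ is the symmetric difference quotient of a smooth function in the spectral arguments and therefore extends continuously to the diagonal. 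Combined with $\Phi_p/p\to\Phi_\lambda/\lambda$, the right-hand side converges to $1-\e[X_1]\,\Phi_\lambda^{2}/\lambda^{2}\,\tilde Z(x,\Phi_\lambda,\Phi_\lambda)$.

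For the left-hand side, it suffices to check continuity of $p\mapsto\p_{x}(\rho^{(p,\lambda)}<\infty)$ at $p=\lambda$, which I would deduce by coupling: the delay $\me_p+\me_\lambda$ depends continuously in distribution on $p$, so the event $\{\rho^{(p,\lambda)}<\infty\}$ has probability continuous in $p$ (a quick dominated convergence using \eqref{linkerlang} and the uniform bound $|\me^{-p\Occ_{\infty,\lambda}}|\le 1$ makes this rigorous). Equating the two limits yields \eqref{ParErlang}.

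The only mildly delicate point is the diagonal limit of $\tilde Z(x,\Phi_\lambda,\Phi_p)$, since the representation in \eqref{eq:ztilde} is a $0/0$ form at $p=\lambda$; this is handled by passing to \eqref{eq:ztilde2}, which is exactly the L'Hôpital value. Everything else is a direct substitution and a routine continuity-of-the-ruin-probability argument, so I don't anticipate any substantive obstacle.
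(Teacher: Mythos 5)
Your proposal is correct and follows the same route as the paper, which likewise obtains \eqref{ParErlang} by letting $p\to\lambda$ in Corollary~\ref{parruinerrl} and using \eqref{eq:ztilde2} for the diagonal value of $\tilde Z$. The paper leaves the continuity details implicit, so your added justification of the limit on both sides simply fleshes out the same argument.
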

Similarly, from Theorem \eqref{forGs}, we obtain the following results.
% One can also obtain a more general quantity which is the joint Laplace transform of $\rho _{\lambda }^{\left( 2\right) }$ and $X_{\rho _{\lambda }^{\left( 2\right) }}$, that is
%\begin{multline}\label{jointlaperl}
%\mathbb{E}_{x}\left[ \mathrm{e}^{-q\rho _{\lambda }^{\left( 2\right)}+\theta X_{\rho _{\lambda }^{\left( 2\right) }}}\mathbf{1}_{\left\{ \rho_{\lambda }^{(2)}<\infty \right\} }\right] \\ =\frac{\Phi _{q+\lambda }-\theta }{\left( \lambda -\psi _{q}\left( \theta \right) \right) ^{2}}\left(\tilde{Z}_{q}\left( x,\theta ,\Phi_{\lambda +q}\right)-\frac{\psi _{q}\left( \theta \right) \left( \Phi _{\lambda+q}-\Phi _{q}\right) \tilde{Z}_{q}\left( x,\Phi _{\lambda +q},\Phi _{\lambda+q}\right) }{\lambda \left( \theta -\Phi _{q}\right) }\right). 
%\end{multline}
\begin{corol}
For $q,\lambda>0$, $x\leq b$ and $y\leq 0$, 
\begin{equation}\label{GSS2}
\mathbb{E}_{x}\left[ \mathrm{e}^{-q\rho ^{\left(2 \right)}_\lambda}\mathbf{1}_{\left\{X_{\rho ^{\left(2 \right) }_\lambda}\in \mathrm{d}y\mathbf{,}\rho^{\left(2 \right) }_\lambda<\tau _{b}^{+}\right\} }\right] =\lambda\left( \mathcal{E}_{y}^{\left(\lambda \right) }\left( x\right) -\frac{\tilde{Z}_{q}\left(
x,\Phi _{\lambda +q},\Phi _{\lambda+q}\right) }{\tilde{Z}_{q}\left( b,\Phi
_{\lambda +q},\Phi _{\lambda+q}\right) }\mathcal{E}_{y}^{\left(\lambda \right)
}\left( b \right) \right)\md y,
\end{equation}
where
\begin{eqnarray*}
\mathcal{E}_{y}^{\lambda }\left( x\right) =\lambda \frac{\partial \mathcal{W}_{x}^{\left( q,\lambda \right) }\left( x-y\right)}{\partial \lambda}-Z_{q}\left( x,\Phi _{\lambda +q}\right) \left( W_{q+\lambda }(-y) +\frac{\partial W_{q+\lambda }(-y)}{\partial \lambda} 
\right) .
\end{eqnarray*}
\begin{equation}\label{twosidederl}
\mathbb{E}_{x}\left[ \mathrm{e}^{-q\rho _{\lambda }^{\left( 2\right)}+\theta X_{\rho _{\lambda }^{\left( 2\right) }}}\mathbf{1}_{\left\{ \rho_{\lambda }^{(2)}<\tau^{+}_b \right\} }\right]=\dfrac{\lambda}{(\psi_{\lambda+q}\left(\theta\right))^{2}}
\left(\mathcal{E}^{(q,\lambda)}(x,\theta) -\dfrac{\tilde{Z}_q\left(x,\Phi_{\lambda+q},\Phi_{\lambda+q}  \right)}{\tilde{Z}_q\left(b,\Phi_{\lambda+q},\Phi_{\lambda+q}  \right)} \mathcal{E}^{(q,\lambda)}(b,\theta) \right),
\end{equation}
and
\begin{equation*}
\e_{x}\left[\me^{-q\tau^{+}_b}\ind_{\left\lbrace \tau^{+}_b<\rho^{(2)}_\lambda\right\rbrace} \right]=\dfrac{\tilde{Z}_q\left(x,\Phi_{\lambda+q},\Phi_{\lambda+q}  \right)}{\tilde{Z}_q\left(b,\Phi_{\lambda+q},\Phi_{\lambda+q}  \right)}.
\end{equation*}
For $x\in \reals,$
\begin{multline}\label{jointlaperl}
\mathbb{E}_{x}\left[ \mathrm{e}^{-q\rho _{\lambda }^{\left( 2\right)}+\theta X_{\rho _{\lambda }^{\left( 2\right) }}}\mathbf{1}_{\left\{ \rho_{\lambda }^{(2)}<\infty \right\} }\right] \\ =\frac{\lambda}{\left(\psi _{\lambda+q}\left( \theta \right) \right) ^{2}}\left(\mathcal{E}^{(q,\lambda)}(x)-\frac{\psi _{q}\left( \theta \right)(\Phi _{q+\lambda }-\theta) \left( \Phi _{\lambda+q}-\Phi _{q}\right) \tilde{Z}_{q}\left( x,\Phi _{\lambda +q},\Phi _{\lambda+q}\right) }{\lambda \left( \theta -\Phi _{q}\right) }\right).
\end{multline}
 \end{corol}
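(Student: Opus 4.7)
The plan is to derive all four identities by passing to the limit $p\to\lambda$ in the corresponding identities of Theorems \ref{forGs} and \ref{GSSTH}. The justification for this limit is that $\me_p+\me_\lambda$ converges in distribution to an Erlang$(2,\lambda)$ random variable as $p\to\lambda$, so under a coupling in which the Poisson observation process is held fixed one has $\rho^{(p,\lambda)}\to\rho^{(2)}_\lambda$ almost surely; bounded convergence then transports the limit inside the expectations on the left-hand sides.

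For \eqref{twosidederl} and \eqref{jointlaperl}, I apply this limit to \eqref{joinsum} and \eqref{twosidedsum3}, respectively. The prefactor $p/(\psi_{q+\lambda}(\theta)\psi_{q+p}(\theta))$ converges to $\lambda/(\psi_{q+\lambda}(\theta))^{2}$, and the object $\tilde{Z}_q(\cdot,\Phi_{\lambda+q},\Phi_{p+q})$ is a divided difference which, by \eqref{eq:ztilde}--\eqref{eq:ztilde2}, extends continuously to the coincident case $p=\lambda$. The two-sided identity for $\e_x\left[\me^{-q\tau_b^+}\ind_{\left\lbrace\tau_b^+<\rho^{(2)}_\lambda\right\rbrace}\right]$ is the limit of \eqref{twosidedsum2}, or equivalently follows at once from Theorem \ref{maintheotwosidedth} taken at $p=\lambda$, together with the observation (used at the end of the proof of Theorem \ref{forGs}) that $\{\tau_b^+<\rho^{(2)}_\lambda\}$ coincides with the event that the Poisson process registers at most one negative observation of $X$ before the upcrossing.

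For the Gerber--Shiu distribution \eqref{GSS2}, the starting point is \eqref{GSS}. The factor $p$ outside the bracket becomes $\lambda$, while the bracketed expression $\mathcal{E}^{(q,\lambda)}_y(x)$ involves the two difference quotients
\[
\frac{\mathcal{W}_x^{(q,\lambda)}(x-y)-\mathcal{W}_x^{(q,p)}(x-y)}{\lambda-p}
\qquad\text{and}\qquad
\frac{\lambda W_{q+\lambda}(-y)-pW_{p+q}(-y)}{\lambda-p},
\]
whose limits as $p\to\lambda$, by L'Hopital, are $\partial_\lambda\mathcal{W}_x^{(q,\lambda)}(x-y)$ and $W_{q+\lambda}(-y)+\lambda\,\partial_\lambda W_{q+\lambda}(-y)$, respectively. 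Substituting back produces the announced formula for $\mathcal{E}^\lambda_y(x)$.

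The main obstacle is to justify the differentiability in $\lambda$ of the scale function $W_{q+\lambda}(\cdot)$ and of the second-generation scale function $\mathcal{W}_x^{(q,\lambda)}(\cdot)$. This is standard: from \eqref{def_scale} one reads off that $s\mapsto W_s(x)$ is real-analytic on $[0,\infty)$ for each fixed $x$, and differentiability of $\mathcal{W}_x^{(q,\lambda)}$ then follows from the convolution representation \eqref{eq:convsnlp}. Once differentiability is in hand, each of the limit manipulations above reduces to a routine application of L'Hopital combined with continuity of elementary scale-function combinations.
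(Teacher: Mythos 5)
Your proposal is correct and follows the paper's own route exactly: the corollary is obtained by letting $p\to\lambda$ in Theorems \ref{forGs} and \ref{GSSTH}, with the divided differences collapsing to the coincident-argument scale function \eqref{eq:ztilde2} and the two difference quotients in $\mathcal{E}_{y}^{(q,\lambda)}$ collapsing to $\lambda$-derivatives, precisely as you compute. One small point in your favour: your limit of the second quotient, $W_{q+\lambda}(-y)+\lambda\,\partial_\lambda W_{q+\lambda}(-y)$, is the correct derivative of $p\mapsto pW_{p+q}(-y)$ at $p=\lambda$, so the factor $\lambda$ missing in front of $\partial_\lambda W_{q+\lambda}(-y)$ in the corollary's displayed $\mathcal{E}_{y}^{\lambda}(x)$ appears to be a typo in the paper rather than a flaw in your argument.
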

%\begin{theorem}\label{maintheo}
%For $q,\lambda>0$ and $x \in \reals $, we have
%\begin{eqnarray*}
%\p_{x}\left( \rho^{(3)}_\lambda  =\infty\right)&=&\mathbb{E}\left[ X_{1}\right] \frac{\Phi _{\lambda }^{2}}{\lambda ^{2}\Phi^{\prime} _{\lambda }}...
%\end{eqnarray*}
%\end{theorem}
%Now, we are interested in the $q-$potential measure of $X$ killed on the Parisian time $\rho^{(p,\lambda)}$. First, we denote 
%$$P_q^{(p,\lambda)}\left(x,y\right)=\p_x\left( X_{\me_q}\in \md y,\me_q<\rho^{(p,\lambda)} \right)$$

%We recall from Landriault et al. \cite{landriaultetal20182} derived the following the $q-$potential measure of $X$ killed on a Poisson discrete observation
%$$r_q^{\lambda}\left(x,y\right)=$$

%We have
%$$P_q^{(p,\lambda)}\left(x,y\right)=\e_x\left[ \me^{\Phi_{p+q}X_{T^{-}_0}}\ind_{\left\lbrace T^{-}_0<\me_q\right\rbrace} \right]P_q^{(p,\lambda)}\left(0,y\right)+\e_x\left[r\left(T^{-}_0,y\right)\ind_{\left\lbrace T^{-}_0<\me_q\right\rbrace}\right]
%$$
\begin{remark}
Setting $\theta=x=0$ in \eqref{jointlaperl}, we get 
 \begin{equation*}
\e\left[\me^{-q\rho^{(2)}_\lambda}\ind_{\left\lbrace \rho^{(2)}_\lambda<\infty \right\rbrace} \right]=\dfrac{\lambda}{\left(\lambda+q\right)}
-\dfrac{\lambda}{\left(\lambda+q\right)^{2}}\frac{\Phi _{\lambda+q}\left(\Phi _{\lambda+q}-\Phi _{q}\right)}{\Phi _{q}\Phi^{\prime}_{\lambda+q} },
\end{equation*}
which corresponds to Equation $(22)$ in \cite{albrecher-ivanovs2017} and $\Phi^{\prime}_{\lambda} $ is the derivative of $\Phi_{\lambda} $ with respect to the sub-index $\lambda$. 
\end{remark}
\subsubsection{Examples}
In this subsection, we compute the probability of Parisian ruin with Erlang$(2,\lambda)$ delays for the Brownian risk model suing the formula in~\eqref{ParErlang}.
\paragraph{\textbf{Brownian risk process}}
Let $X$ be a Brownian risk process, i.e.\
$$
X_t =x+ \mu t +\sigma B_t ,
$$
where $\mu>0$, $\mu>0$ and $B=\{B_t, t\geq 0\}$ is a standard Brownian motion. Then, we have 
$\psi (\theta )=\mu \theta +\frac{\sigma ^{2}}{2}\theta ^{2}$ and $\Phi _{\lambda }=\left( \sqrt{\mu ^{2}+2\sigma ^{2}\lambda }-\mu\right)\sigma ^{-2} .$
In this case, for $x\geq 0$ and $q>0$, the scale functions of $X$ are given by
\begin{align*}
W(x) &= \frac{1}{\mu} \left( 1-\mathrm{e}^{-2\mu x/\sigma^{2} } \right),
\end{align*} 
and
\begin{equation*}
Z\left( x,\theta \right) =\frac{\psi (\theta )}{\mu }\left( \frac{1}{\theta }%
-\frac{\mathrm{e}^{-2\mu \sigma ^{-2}x}}{\theta +2\mu \sigma ^{-2}}\right).
\end{equation*}
The derivative of $Z$ with respect to $\theta$ is given by
\begin{eqnarray*}
Z^{\prime }\left( x,\theta \right)  &=&\frac{\psi ^{\prime }(\theta )}{\mu }%
\left( \frac{1}{\theta }-\frac{\mathrm{e}^{-2\mu \sigma ^{-2}x}}{\theta
+2\mu \sigma ^{-2}}\right)  \\
&&+\frac{\psi (\theta )}{\mu }\left( \frac{\mathrm{e}^{-2\mu \sigma ^{-2}x}}{%
\left( \theta +2\mu \sigma ^{-2}\right) ^{2}}-\frac{1}{\theta ^{2}}\right) 
\\
&=&\psi ^{\prime }(\theta )\frac{Z\left( x,\theta \right) }{\psi (\theta )}+%
\frac{\psi (\theta )}{\mu }\left( \frac{\mathrm{e}^{-2\mu \sigma ^{-2}x}}{%
\left( \theta +2\mu \sigma ^{-2}\right) ^{2}}-\frac{1}{\theta ^{2}}\right).
\end{eqnarray*}
Using the expression in \eqref{eq:ztilde2}, we have
\begin{eqnarray*}
\tilde{Z}\left( x,\Phi _{\lambda },\Phi _{\lambda }\right)=\frac{\lambda }{\mu }\left( \frac{1}{\Phi _{\lambda }^{2}}-\frac{\mathrm{e%
}^{-2\mu \sigma ^{-2}x}}{\left( \Phi _{\lambda }+2\mu \sigma ^{-2}\right) }%
\right) .
\end{eqnarray*}
Putting all the terms together, we get the following expression for the probability of Parisian ruin with Erlang$(2,\lambda)$ delay,
\begin{equation*}
\mathbb{P}_{x}\left( \rho _{\lambda }^{(2)}<\infty \right) =1-\frac{
\left( \sqrt{\mu ^{2}+2\sigma ^{2}\lambda }-\mu \right) ^{2}}{\lambda
^{2}\sigma ^{4}}\left( \frac{1}{\Phi _{\lambda }}-\frac{\mathrm{e}%
^{-2\mu \sigma ^{-2}x}}{\left( \Phi _{\lambda }+2\mu \sigma ^{-2}\right) }%
\right) .
\end{equation*}
\paragraph{\textbf{Cramér-Lundberg process with exponential claims}}
Let $X$ be a Cramér-Lundberg risk processes with exponentially distributed claims, i.e.\
$$
X_t =x+ \drift t - \sum_{i=1}^{N_t} C_i ,
$$
where $N=\{N_t, t\geq 0\}$ is a Poisson process with intensity $\eta>0$, and where $\{C_1, C_2, \dots\}$ are independent and exponentially distributed random variables with parameter $\alpha$. The Poisson process and the random variables are mutually independent. Then, we have 
\begin{equation*}
\psi (\theta )=c\theta -\eta +\frac{\alpha \eta }{\theta +\alpha }\
\ \ \text{ and
\ }\Phi_{\lambda}= \frac{1}{2c}\left(\lambda+\eta -c\alpha +\sqrt{\left( \lambda+\eta -c\alpha \right) ^{2}+4c\alpha \lambda}\right).
\end{equation*}
The scale functions of $X$ are given by 
\begin{align*}
W(x) &= \frac{1}{c-\eta/\alpha} \left( 1- \frac{\eta}{c\alpha}\mathrm{e}^{(\frac{\eta}{c}-\alpha)x} \right),
\end{align*}
and 
\begin{equation*}
Z\left( x,\theta \right) =\frac{\psi (\theta )}{c-\eta /\alpha }\left( \frac{%
1}{\theta }-\frac{\eta }{c\alpha }\frac{\mathrm{e}^{\left( \eta /c-\alpha
\right) x}}{\theta +\alpha -\eta /c}\right) .
\end{equation*}
The derivative of $Z$ with respect to $\theta$ is given by
\begin{equation*}
Z^{\prime }\left( x,\theta \right) =\frac{\psi ^{\prime }(\theta )}{\psi
(\theta )}Z\left( x,\theta \right) +\frac{\psi (\theta )}{c-\eta /\alpha }%
\left( \frac{\eta }{c\alpha }\frac{\mathrm{e}^{\left( \eta /c-\alpha \right)
x}}{\left( \theta +\alpha -\eta /c\right) ^{2}}-\frac{1}{\theta ^{2}}\right),
\end{equation*}
and consequently,
\begin{equation*}
\tilde{Z}\left( x,\Phi _{\lambda },\Phi _{\lambda }\right) =\frac{\lambda }{%
c-\eta /\alpha }\left( \frac{1}{\Phi _{\lambda }^{2}}-\frac{\eta }{c\alpha }%
\frac{\mathrm{e}^{\left( \eta /c-\alpha \right) x}}{\left( \Phi _{\lambda
}+\alpha -\eta /c\right) ^{2}}\right) .
\end{equation*}
Putting all the pieces together, we obtain
\begin{equation*}
\mathbb{P}_{x}\left( \rho _{\lambda }^{(2)}<\infty \right) =1-\frac{1}{%
\lambda }\left( \frac{1}{\Phi _{\lambda }^{2}}-\frac{\eta }{c\alpha }\frac{%
\mathrm{e}^{\left( \eta /c-\alpha \right) x}}{\left( \Phi _{\lambda }+\alpha
-\eta /c\right) ^{2}}\right). 
\end{equation*}
\subsection{Parisian ruin with Erlang$(n,\lambda)$ implementation delays ($n\geq 3$)}
Now, we suppose that occupation time is accumulated after two consecutive observations of the surplus process below $0$, we denote the Poissonian occupation time by 
$$\Occ_{\infty,\lambda,2}= \sum_{n\in \mathbb{N}}(\tau^{+}_0 \circ \theta_{T_{n+1}})\ind_{\left\lbrace \sup_{t\in \left[ T_{n},T_{n+1}\right]} \left( X_t\right) <0 \right\rbrace} .$$
The Laplace transform of $\Occ_{\infty,\lambda,2}$ can be computed using the following a standard probabilistic decomposition 
%\begin{theorem}\label{maintheo}
%For $q,\lambda>0$ and $x \in \reals $, we %have
%\begin{eqnarray}\label{twostep}
%\mathbb{E}_{x}\left[ \mathrm{e}^{-q\Occ_{\mathrm{\infty },\lambda ,2}}\right]
%=\mathbb{E}\left[ X_{1}\right] \frac{\Phi _{q}\Phi _{\lambda }^{2}}{%
%\lambda ^{2}}\frac{\left( \lambda /q\left( \Phi _{\lambda }-\Phi _{q}\right) 
%\tilde{Z}\left( x,\Phi _{\lambda },\Phi _{q}\right) -\left( \lambda-q\right) \Phi _{\lambda }^{\prime }\tilde{Z}\left( x,\Phi _{\lambda },\Phi_{\lambda }\right) \right) }{\Phi _{\lambda }\left( \Phi _{\lambda }-\Phi_{q}\right) -\Phi _{q}\left( \lambda -q\right) \Phi _{\lambda }^{\prime }}.
%\end{eqnarray}
%where 
%\begin{equation}\label{eq:ztilde11}
%Z\left( x,\alpha ,\beta \right) =\dfrac{1}{\alpha-\beta} \left(\frac{\psi \left( \alpha \right)\left( \alpha -\beta \right) 
%}{\left( \psi \left( \alpha \right) -\psi \left( \beta \right) \right) }\tilde{Z}\left( x,\alpha ,\beta \right)-\dfrac{\psi \left( \beta \right) }{\psi^{\prime} \left(\alpha \right)}\tilde{Z}\left( x,\alpha ,\alpha \right)\right).
%\end{equation}
%\end{theorem}
%\begin{proof}
%For $x<0$, by the strong Markov property and the spectral negativity of $X$, we have
%\begin{equation}\label{xneg2}
%\mathbb{E}_{x}\left[ \mathrm{e}^{-q%\Occ_{\infty ,\lambda,2 }}\right] =%
%\mathbb{E}_{x}\left[ \mathrm{e}^{-q \tau _{0}^{+}}\right] 
%\mathbb{E}\left[ \mathrm{e}^{-q\Occ_{\infty,\lambda,2 }}\right] .
%\end{equation}
\begin{eqnarray}\label{twostep}
\mathbb{E}_{x}\left[ \mathrm{e}^{-q\Occ_{\infty,\lambda,2 }}\right] 
=\p_x \left(\rho^{(2)}_\lambda=\infty \right)+\mathbb{E}_{x}\left[\mathrm{e}^{\Phi _{q}X_{\rho^{(2)}_\lambda}}\ind_{\left\{\rho^{(2)}_\lambda<\infty \right\} }\right] \mathbb{E}%
\left[ \mathrm{e}^{-q\Occ_{\infty,\lambda,2 }}\right] .
\end{eqnarray}

Letting $q\rightarrow \lambda$ in \eqref{twostep} combined with the results in the previous subsection \eqref{sect:subssectionexamp}, one can obtain an expression for probability of Parisian ruin with Erlang$(3,\lambda)$ implementation delays.\\

More generally, we denote the $n^{th}$-Poissonian occupation time by $\Occ_{t,\lambda,n}$. In this case, occupation time will be accumulated when the process $X$ has been observed to be negative at the last $n$ Poisson arrival times, i.e., when $\sup_{t\in \left[ T_{i},T_{i+n}\right]}(X_t)<0 $. Then, we have
$$\Occ_{\infty,\lambda,n}= \sum_{i\in \mathbb{N}}(\tau^{+}_0 \circ \theta_{T_{i+n}})\ind_{\left\lbrace \sup_{t\in \left[ T_{i},T_{i+n}\right]} \left( X_t\right) <0 \right\rbrace} .$$
Also, we denote $\rho^{(n)}_\lambda$ as the Parisian ruin time with Erlang$(n,\lambda)$ implementation delay, that is, 
$$\rho^{(n)}_\lambda=\inf \left\{ t>0\text{ | }t-g_{t}>T^{n,g_t}_{\lambda} \right\},$$
where $T^{n,g_t}_{\lambda}$ follows an Erlang$(n,\lambda)$ distribution. In particular, we have $\rho^{(0)}_\lambda = \tau^{-}_0$ and $\rho^{(1)}_\lambda = T^{-}_0$. From the discussion at the beginning of this section,  we have the following connection
\begin{equation}\label{genlink}
\mathbb{P}_{x}\left( \rho _{\lambda }^{(n)}<\infty \right)=1-\mathbb{E}_{x}\left[ \mathrm{e}^{-\lambda\Occ_{\infty ,\lambda,n }}\right].
\end{equation}
Thus, we obtain the following recursive formula for the probability of Parisian ruin with Erlang$(n,\lambda)$ implementation delays.
\begin{prop}\label{maintheo3}
For $n\in \mathbb{N}$, $\lambda>0$ and $x \in \reals $,
 \begin{equation}\label{decomp}
\mathbb{P}_{x}\left( \rho _{\lambda }^{(n)}=\infty \right)=\mathbb{P}_{x}\left( \rho _{\lambda }^{(n-1)}=\infty \right) +\mathbb{P}\left( \rho_{\lambda }^{(n-1)}=\infty \right)\dfrac{\mathbb{E}_{x}\left[ \mathrm{e}^{\Phi_{\lambda }X_{\rho _{\lambda }^{(n-1)}}}\mathbf{1}_{\left\{ \rho _{\lambda}^{(n-1)}<\infty \right\} }\right]}{1-\mathbb{E}\left[ \mathrm{e}^{\Phi_{\lambda }X_{\rho _{\lambda }^{(n-1)}}}\mathbf{1}_{\left\{ \rho _{\lambda}^{(n-1)}<\infty \right\} }\right]} .
\end{equation}
%\begin{equation}\label{mainres3}
%\p_x\left(\rho^{(n)}_\lambda=\infty \right)=\mathbb{E%
%}\left[ X_{1}\right] \frac{\Phi _{\lambda}}{\lambda} Z\left( x,\Phi _{\lambda}\right)+\sum_{i=1}^{n-1}\p\left(\rho^{(i+1)}_\lambda=\infty \right) \mathbb{E}_{x}\left[\me^{\Phi_\lambda X_{\rho^{(i)}_\lambda}} \ind_{\left\lbrace\rho^{(i)}_\lambda<\infty \right\rbrace} \right].
%\end{equation}
\end{prop}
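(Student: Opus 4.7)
The plan is to derive the recursion by a strong-Markov decomposition at the stopping time $\rho^{(n-1)}_{\lambda}$, together with the memoryless property of the Poisson observation process, and then close the resulting self-referential equation by evaluating at $x=0$.

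First I would observe that $\rho^{(n-1)}_{\lambda}\leq \rho^{(n)}_{\lambda}$: any excursion below $0$ witnessing $n$ consecutive Poisson arrivals also witnesses $n-1$, so the first excursion to accumulate $n-1$ in-excursion arrivals occurs no later than the first to accumulate $n$. Hence $\{\rho^{(n-1)}_{\lambda}=\infty\}\subseteq\{\rho^{(n)}_{\lambda}=\infty\}$ and I may decompose
\begin{equation*}
\p_x\!\left(\rho^{(n)}_{\lambda}=\infty\right)=\p_x\!\left(\rho^{(n-1)}_{\lambda}=\infty\right)+\e_x\!\left[\ind_{\{\rho^{(n-1)}_{\lambda}<\infty\}}\,G\!\left(X_{\rho^{(n-1)}_{\lambda}}\right)\right],
\end{equation*}
where, for $y<0$, $G(y)$ denotes the conditional survival probability of $\rho^{(n)}_{\lambda}$ given the post-$\rho^{(n-1)}_{\lambda}$ state $y$, computed by the strong Markov property.

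Next I would compute $G(y)$ explicitly. At time $\rho^{(n-1)}_{\lambda}$ the process sits at some $y<0$ and, by the memoryless property, the waiting time until the next Poisson arrival is an independent $\me_{\lambda}$. Two disjoint possibilities govern the continuation of the current excursion below $0$:
\begin{enumerate}
\item[(i)] The next Poisson arrival occurs before $X$ returns to $0$. This completes the $n$-th consecutive in-excursion observation, so $\rho^{(n)}_{\lambda}<\infty$ immediately. By standard first-passage identities this event has probability $1-\e_y[\me^{-\lambda\tau_0^+}]=1-\me^{\Phi_{\lambda}y}$.
\item[(ii)] $X$ crosses $0$ first, with probability $\me^{\Phi_{\lambda}y}$. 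By spectral negativity $X$ hits $0$ exactly, and by strong Markov combined with spatial homogeneity the post-$\tau_0^+$ process is a fresh copy of $X$ started at $0$, with a fresh (memoryless) Poisson process. The survival probability of $\rho^{(n)}_{\lambda}$ in this fresh scheme is $\p(\rho^{(n)}_{\lambda}=\infty)$.
\end{enumerate}
Combining these two cases yields
\begin{equation*}
G(y)=\me^{\Phi_{\lambda}y}\,\p\!\left(\rho^{(n)}_{\lambda}=\infty\right),
\end{equation*}
which leads to the self-referential identity
\begin{equation*}
\p_x\!\left(\rho^{(n)}_{\lambda}=\infty\right)=\p_x\!\left(\rho^{(n-1)}_{\lambda}=\infty\right)+\e_x\!\left[\me^{\Phi_{\lambda}X_{\rho^{(n-1)}_{\lambda}}}\ind_{\{\rho^{(n-1)}_{\lambda}<\infty\}}\right]\p\!\left(\rho^{(n)}_{\lambda}=\infty\right).
\end{equation*}

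Finally, setting $x=0$ I would solve for $\p(\rho^{(n)}_{\lambda}=\infty)$. Writing $A:=\e\!\left[\me^{\Phi_{\lambda}X_{\rho^{(n-1)}_{\lambda}}}\ind_{\{\rho^{(n-1)}_{\lambda}<\infty\}}\right]$, the equation at $x=0$ gives $\p(\rho^{(n)}_{\lambda}=\infty)=\p(\rho^{(n-1)}_{\lambda}=\infty)/(1-A)$, and substituting this back into the identity for general $x$ delivers the claimed recursion \eqref{decomp}. The main delicate point — and the one I would spend the most care on — is step (ii) of the continuation argument: I must justify that conditionally on $\rho^{(n-1)}_{\lambda}<\infty$ and $X_{\rho^{(n-1)}_{\lambda}}=y$, the future evolution is a spectrally negative Lévy process started at $y$ driven by an independent, freshly started Poisson observation process of rate $\lambda$, which requires invoking the strong Markov property for $(X,N)$ at $\rho^{(n-1)}_{\lambda}$ together with the memoryless property of the exponential inter-arrival times. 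A separate subtlety is the implicit well-posedness: the derivation requires $A<1$, which follows since $\mathbb{P}(\rho^{(n-1)}_\lambda=\infty)>0$ under $\e[X_1]>0$.
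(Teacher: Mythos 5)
Your argument is correct and is essentially the paper's own (only sketched there): your self-referential identity is exactly the decomposition \eqref{twostep} generalized to level $n-1$ and evaluated at $q=\lambda$ via the link \eqref{genlink}, since $\mathbb{E}_x\bigl[\mathrm{e}^{\Phi_{\lambda}X_{\rho^{(n-1)}_{\lambda}}}\ind_{\{\rho^{(n-1)}_{\lambda}<\infty\}}\bigr]$ is precisely the probability of returning to $0$ before the next Poisson arrival after $\rho^{(n-1)}_{\lambda}$, and both proofs close the equation by evaluating at $x=0$. Your direct phrasing in terms of ruin probabilities, with the explicit two-case analysis of $G(y)$ and the justification of the fresh restart at $0$, is a welcome expansion of what the paper leaves implicit.
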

\begin{remark}
When $n$ tends to infinity, it is possible to approximate the probability of Parisian ruin with fixed delay, that is
\begin{equation}\label{kappar}
\kappa_r = \inf \left\lbrace t > 0 \colon t - g_t > r \right\rbrace ,
\end{equation}
where $g_t = \sup \left\lbrace 0 \leq s \leq t \colon X_s \geq 0 \right\rbrace$, by the the probability of Parisian ruin with Erlang$(n,\lambda=n/r)$ distributed implementation delays (see Bladt et al. \cite{bladtetal2019} and Landriault et al. \cite{landriaultetal2011}).
% that is, for $r>0$ and $x\in \reals$, we have
%\begin{equation}\label{mainres2}
%\lim_{n\rightarrow \infty}  \p_x\left(\rho^{(n)}_{n/r}<\infty \right) = \p_x\left( \kappa_r <\infty \right).
% \end{equation}
\end{remark}
%First, we can rewrite \eqref{mainres2} as follows
%\begin{equation*}
%\mathbb{P}_{x}\left( \rho _{n/r}^{(n)}=\infty \right) =\mathbb{E}\left[ X_{1}%
%\right] \frac{\Phi _{\lambda }}{\lambda }Z\left( x,\Phi _{\lambda }\right) +%
%\mathbb{E}\left[ X_{1}\right] \frac{\Phi _{\lambda }}{\lambda }%
%\sum_{i=1}^{n-1} \prod \limits_{j=1}^{i}\frac{\mathbb{E}_{x}\left[ \mathrm{e}%
%^{\Phi _{\lambda }X_{\rho _{\lambda }^{(i)}}}\mathbf{1}_{\left\{ \rho
%_{\lambda }^{(i)}<\infty \right\} }\right] }{1-\mathbb{E}\left[ \mathrm{e}%
%^{\Phi _{\lambda }X_{\rho _{\lambda }^{(j)}}}\mathbf{1}_{\left\{ \rho
%_{\lambda }^{(j)}<\infty \right\} }\right] }.
%\end{equation*}
%Since
%$$\lim_{n\rightarrow \infty}  \mathbb{E%
%}\left[ X_{1}\right] \frac{\Phi _{n/r}}{(n/r)} Z\left( x,\Phi _{n/r}\right)=\mathbb{E}\left[ X_{1}\right]W\left( x\right), $$
%it remains to show that %...
%\begin{multline*}
%\lim_{n\rightarrow \infty }\mathbb{E}_{x}\left[ \sum_{i=0}^{n-1}\mathbb{P}%
%\left( \rho _{n/r}^{(i+1)}=\infty \right) \mathbb{P}_{X_{\rho_{n/r}^{(i)}}}\left( \tau _{0}^{+}\leq \mathrm{e}_{n/r}\right) \mathbf{1}%_{\left\{ \rho _{n/r}^{(i)}<\infty \right\} }\right]\\ =\mathbb{E}_{x}\left[ \mathbb{P}\left( \kappa_{r}=\infty \right) \mathbb{P}_{X_{\tau_{0}^{-}}}\left( \tau _{0}^{+}\leq r\right) \mathbf{1}_{\left\{ \tau_{0}^{-}<\infty \right\} }\right] 
%end{multline*}%
\section{Acknowledgements}
The author would like to thank Professor Bin Li for the stimulating discussions on the topic.

\appendix
%%%%%%%%%%%%%%%%%%%%%%%%%%%%%%%%%%%%%%%
%%%%%%%%%%%%%%%%%%%%%%%%%%%%%%%%%%%%%%%
\section{Fluctuation identities with delays}
In this subsection , we present some of the existing fluctuation identities with delays. When $\e[X_{1}]>0$, we have
\begin{equation}\label{Pruine1}
\p_{x}\left( T_{0}^-<\infty\right)=1-\e_{x}\left[ \me^{-\lambda\Occ_\infty} \right] = 1 - \e[X_{1}] \frac{\Phi_\lambda}{\lambda} Z \left(x,\Phi_\lambda \right).
\end{equation}
We also have the following expression for the the joint Laplace transform of $\left( T_{0}^{-},X_{T_{0}^{-}}\right)$ which has been derived by Albrecher et al. \cite{albrecheretal2016}. 
\begin{lemma}
For $\lambda>0$, $a,b,q,\theta\geq 0$ and $x \leq b$, we have
\begin{equation}
\mathbb{E}_{x}\left[ \mathrm{e}^{-qT_{0}^{-}+\theta X_{T_{0}^{-}}}\ind_{\left\lbrace T_{0}^{-}<\tau^{+}_b \right\rbrace}
\right] =\dfrac{\lambda }{\lambda -\psi _{q}(\theta )}\left( Z_{q}\left(
x,\theta \right) -Z_{q}\left( x,\Phi _{\lambda +q}\right) \dfrac{Z_{q}\left(b,\theta\right) }{Z_{q}\left( b,\Phi _{\lambda +q}\right)}\right).  \label{jointLap1b}
\end{equation}
For $-a\leq x\leq b$,
\begin{equation}\label{jointLap1sda}
\mathbb{E}_{x}\left[ \mathrm{e}^{-q\tau^{+}_b}\ind_{\left\lbrace \tau^{+}_b<T_{0}^{-}\wedge \tau^{-}_{-a} \right\rbrace}\right]=\frac{\mathcal{W}_{x}^{\left(q,\lambda\right) }\left(x+a\right) }{\mathcal{W}_{b}^{\left(q,\lambda\right) }\left(b+a\right) },
\end{equation}
and consequently,
\begin{equation}
\mathbb{E}_{x}\left[ \mathrm{e}^{-q\tau^{+}_b}\ind_{\left\lbrace \tau^{+}_b<T_{0}^{-} \right\rbrace}\right]=\dfrac{Z_{q}\left(x,\Phi _{\lambda +q}\right) }{Z_{q}\left( b,\Phi _{\lambda +q}\right)}.  \label{jointLap1sd}
\end{equation}
\end{lemma}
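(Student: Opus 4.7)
I would prove the three identities in the order (2), (3), (1). Identity (3) drops out of (2) by sending $a \to \infty$, and identity (1) follows from a closely parallel computation. The common engine is the strong Markov property applied at $\tau_0^{\pm}$, combined with two structural features: spectral negativity forces $X_{\tau_0^+}=0$ on $\{\tau_0^+<\infty\}$, and while $X<0$ each arrival of the independent rate-$\lambda$ Poisson process immediately triggers $T_0^-$, so the Poisson clock acts as an independent exponential killing at rate $\lambda$ on the sub-zero excursions.

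For identity (2), set $w(x)=\mathbb{E}_x[\mathrm{e}^{-q\tau_b^+}\ind_{\{\tau_b^+<T_0^-\wedge\tau_{-a}^-\}}]$. For $-a\le x<0$, since reaching $b$ requires first upcrossing $0$, the killing-at-rate-$\lambda$ interpretation together with the classical two-sided exit identity gives
\[
w(x)=\mathbb{E}_x\bigl[\mathrm{e}^{-(q+\lambda)\tau_0^+}\ind_{\{\tau_0^+<\tau_{-a}^-\}}\bigr]\,w(0)=\frac{W_{q+\lambda}(x+a)}{W_{q+\lambda}(a)}\,w(0).
\]
For $0\le x\le b$, apply the strong Markov property at $\tau_0^-$:
\[
w(x)=\mathbb{E}_x\bigl[\mathrm{e}^{-q\tau_b^+}\ind_{\{\tau_b^+<\tau_0^-\}}\bigr]+\mathbb{E}_x\bigl[\mathrm{e}^{-q\tau_0^-}\,w(X_{\tau_0^-})\ind_{\{\tau_0^-<\tau_b^+\}}\bigr],
\]
plug in the $x<0$ formula for $w(X_{\tau_0^-})$ (zero if $X_{\tau_0^-}<-a$), and evaluate the remaining Gerber-Shiu type expectation $\mathbb{E}_x[\mathrm{e}^{-q\tau_0^-}W_{q+\lambda}(X_{\tau_0^-}+a)\ind_{\{\tau_0^-<\tau_b^+\}}]$ using the standard scale-function identity. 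Determining $w(0)$ by taking $x=0$ and simplifying, the combination of $W_q$'s and $W_{q+\lambda}$'s appearing in the numerator and denominator should collapse, via the definition \eqref{eq:convsnlp}, into $\mathcal{W}_x^{(q,\lambda)}(x+a)/\mathcal{W}_b^{(q,\lambda)}(b+a)$.

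For identity (3), I would let $a\to\infty$ in (2). Using \eqref{CM1} together with the convolutional definition \eqref{eq:convsnlp}, one checks
\[
\lim_{a\to\infty}\frac{\mathcal{W}_x^{(q,\lambda)}(x+a)}{W_{q+\lambda}(a)}=Z_q(x,\Phi_{q+\lambda}),
\]
so the ratio becomes $Z_q(x,\Phi_{q+\lambda})/Z_q(b,\Phi_{q+\lambda})$, which is identity (3). For identity (1), let $v(x)=\mathbb{E}_x[\mathrm{e}^{-qT_0^-+\theta X_{T_0^-}}\ind_{\{T_0^-<\tau_b^+\}}]$. For $x<0$, split on $\{T_0^-<\tau_0^+\}\cup\{\tau_0^+<T_0^-\}$: on the first event, since the Poisson arrival up to $\tau_0^+$ is an independent exponential $\me_\lambda$, one has
\[
\mathbb{E}_x\bigl[\mathrm{e}^{-q\me_\lambda+\theta X_{\me_\lambda}}\ind_{\{\me_\lambda<\tau_0^+\}}\bigr]=\frac{-\lambda}{\psi_{q+\lambda}(\theta)}\Bigl(Z_q(x,\theta)-\tfrac{Z_q(x,\Phi_{q+\lambda})\psi_q(\theta)}{\psi_{q+\lambda}(\theta)/(\text{boundary})}\Bigr),
\]
with the explicit form coming from Laplace inversion of \eqref{def_scale} and the optional stopping identity for the martingale $\mathrm{e}^{\theta X_t - t\psi(\theta)}$; on the second event one picks up $\mathbb{E}_x[\mathrm{e}^{-(q+\lambda)\tau_0^+}]v(0)=\mathrm{e}^{\Phi_{q+\lambda}x}v(0)$. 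For $0\le x\le b$, apply strong Markov at $\tau_0^-$ to express $v(x)=\mathbb{E}_x[\mathrm{e}^{-q\tau_0^-}v(X_{\tau_0^-})\ind_{\{\tau_0^-<\tau_b^+\}}]$. Solving for $v(0)$ at $x=0$ and substituting yields the stated form.

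\textbf{Main obstacle.} I expect the routine piece (strong Markov decomposition, use of standard two-sided exit and Gerber-Shiu formulas) to go through without surprise; the bookkeeping bottleneck is the algebraic simplification collapsing a sum of terms of the shape $\alpha Z_q(x,\theta)+\beta Z_q(x,\Phi_{q+\lambda})+\gamma W_q(x)+\delta W_{q+\lambda}(x+a)$ into the compact fractions on the right-hand sides, which is where the identities \eqref{eq:convsnlp}, \eqref{conveq}, and \eqref{LapscriptW} must be applied in just the right way.
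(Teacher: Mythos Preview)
The paper does not actually prove this lemma: it is stated in the Appendix under ``Fluctuation identities with delays'' and attributed to Albrecher et al.\ \cite{albrecheretal2016}, with no argument given. So there is no paper-proof to compare against beyond the citation.

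Your proposed route---strong Markov at $\tau_0^\pm$, interpreting the Poisson clock on the sub-zero excursion as independent exponential killing at rate $\lambda$, plugging in the classical two-sided exit and Gerber--Shiu formulas, then solving for the value at $0$---is exactly the standard derivation used in the cited literature, and it is correct. The order you propose (prove \eqref{jointLap1sda}, take $a\to\infty$ for \eqref{jointLap1sd}, then handle \eqref{jointLap1b} by a parallel computation) is natural; in particular the limit step for \eqref{jointLap1sd} via \eqref{CM1} and the second line of \eqref{eq:convsnlp} is precisely the device the paper itself uses later (in the proof of Theorem~\ref{forGs}) to pass from $\tilde W_q^{(p,\lambda)}$ to $\tilde Z_q$. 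One small cosmetic point: in your sketch for \eqref{jointLap1b} the displayed ``explicit form'' for $\mathbb{E}_x[\mathrm{e}^{-q\me_\lambda+\theta X_{\me_\lambda}}\ind_{\{\me_\lambda<\tau_0^+\}}]$ is garbled (the ``boundary'' placeholder); the clean version for $x<0$ is
\[
\mathbb{E}_x\bigl[\mathrm{e}^{-q\me_\lambda+\theta X_{\me_\lambda}}\ind_{\{\me_\lambda<\tau_0^+\}}\bigr]
=\frac{\lambda}{\psi_{q+\lambda}(\theta)}\bigl(\mathrm{e}^{\Phi_{q+\lambda}x}-\mathrm{e}^{\theta x}\bigr)
=\frac{-\lambda}{\psi_{q+\lambda}(\theta)}\bigl(Z_q(x,\theta)-Z_q(x,\Phi_{q+\lambda})\bigr),
\]
which follows directly from the resolvent of $X$ killed at $\tau_0^+$. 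With that in hand the rest of your algebra goes through.
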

%\begin{remark}
%Using the facr that $\left\lbrace \tau^{+}_b<T_{0}^{-} \right\rbrace=\left\lbrace \Occ_{\tau^{+}_b}<\me_\lambda \right\rbrace$, it possible to rewrite identity \eqref{jointLap1sda} as follows 
%\begin{equation}
%\mathbb{E}_{x}\left[ \mathrm{e}^{-q\tau^{+}_b}\ind_{\left\lbrace \tau^{+}_b<T_{0}^{-}\wedge \tau^{-}_{-a} \right\rbrace}\right] =\mathbb{E}_{x}\left[ \mathrm{e}^{-q\tau^{+}_b+\lambda \Occ_{\tau^{+}_b}}\ind_{\left\lbrace \tau^{+}_b< \tau^{-}_{-a} \right\rbrace}\right].
%\end{equation}
%\end{remark}
We also have the following useful identity one can extract from \cite{lkabous2018}.
\begin{lemma}\label{L:L5}
For $a,b,p,q\geq 0$, $\lambda, r,z>0$ and $-a\leq x\leq b$, we have
\begin{multline}
\mathbb{E}_{x}\left[ \mathrm{e}^{-qT_{0}^{-}}W_{p}\left(
X_{T_{0}^{-}}+z\right) \mathbf{1}_{\left\{ T_{0}^{-}<\tau _{b}^{+}\wedge \tau^{-}_{-a} \right\} }%
\right]\\ =\dfrac{\lambda }{p-\left( q+\lambda \right)}\frac{\mathcal{W}_{x}^{\left(q,\lambda\right) }\left(x+a\right) }{\mathcal{W}_{b}^{\left(q,\lambda\right) }\left(b+a\right) }%
\left( \mathcal{W}_{b}^{\left( q,p-q\right) }\left( b+z\right) -\mathcal{W}%
_{b}^{\left( q,\lambda \right) }\left( b+z\right) \right)   \label{LL1} \\
-\dfrac{\lambda }{p-\left( q+\lambda \right) }\left( \mathcal{W}%
_{x}^{\left( q,p-q\right) }\left( x+z\right) -\mathcal{W}_{x}^{\left(
q,\lambda \right) }\left( x+z\right) \right).
\end{multline}
%and for $x\in \reals$,
%\begin{multline}
%\mathbb{E}_{x}\left[ \mathrm{e}^{-qT_{0}^{-}}W_{p}\left(X_{T_{0}^{-}}+z\right) \mathbf{1}_{\left\{ T_{0}^{-}<\infty \right\} }\right]\\=\dfrac{\left( \Phi _{\lambda +q}-\Phi _{q}\right) }{p-\left( q+\lambda \right) }Z_{p}\left( x,\Phi _{\lambda +p}\right) \left(Z_{p}\left( x+z,\Phi _{q}\right)-Z_{\lambda+q}\left(x+z,\Phi _{q}\right)\right)\label{LL2} \\
%-\dfrac{\lambda }{p-\left( q+\lambda \right)}\left( \mathcal{W}%
%_{x}^{\left( q,p-q\right) }\left( x+z\right) -\mathcal{W}_{x}^{\left(q,\lambda \right) }\left( x+z\right) \right).
%\end{multline}
\end{lemma}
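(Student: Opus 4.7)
I would prove the identity by mimicking the strong-Markov decomposition that already underpins Theorem~\ref{maintheotwosidedth} and Theorem~\ref{GSSTH}, but now carrying the weight $W_p(X_{T_0^-}+z)$ instead of the exponential $e^{\theta X_{T_0^-}}$. Set
$$w(x) := \mathbb{E}_{x}\!\left[\mathrm{e}^{-qT_{0}^{-}} W_{p}(X_{T_{0}^{-}}+z)\, \mathbf{1}_{\{T_{0}^{-}<\tau_{b}^{+}\wedge \tau^{-}_{-a}\}}\right].$$
For $-a\leq x<0$, the spectral negativity of $X$ and the strong Markov property at $\tau_0^+\wedge T_1$ (with $T_1\sim\mathrm{Exp}(\lambda)$ the first Poisson epoch) give
$$w(x) = \mathbb{E}_{x}\!\left[\mathrm{e}^{-(q+\lambda)\tau_0^+}\mathbf{1}_{\{\tau_0^+<\tau_{-a}^-\}}\right] w(0) + v(x),\qquad v(x) := \lambda\!\int_{0}^{\infty}\! \mathrm{e}^{-(q+\lambda)t}\,\mathbb{E}_{x}\!\left[W_p(X_t+z)\mathbf{1}_{\{t<\tau_0^+\wedge \tau_{-a}^-\}}\right]dt.$$
For $0\leq x\leq b$, conditioning on $T_0^-$ and using that $X$ cannot overshoot $0$ from below,
$$w(x) = \mathbb{E}_{x}\!\left[\mathrm{e}^{-qT_0^-}\, w(X_{T_0^-})\,\mathbf{1}_{\{T_0^-<\tau_b^+\wedge\tau_{-a}^-\}}\right].$$
Evaluating at $x=0$ and using the closed forms in Equations~\eqref{jointLap1b} and \eqref{jointLap1sda} turns this into a single linear equation for $w(0)$, which I solve just as in the proof of Theorem~\ref{forGs} to get $w(0)$ in terms of $\mathcal{W}_b^{(q,\lambda)}$ and $W_{p+q}(a),W_{q+\lambda}(a)$.

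The key computation is the potential term $v(x)$. Using the known $(q+\lambda)$-resolvent density of $X$ killed on exiting $[-a,0]$ (see, e.g., Kyprianou \cite{kyprianou2014}), $v(x)$ reduces to an integral of $W_p(y+z)$ against a linear combination of $W_{q+\lambda}$ evaluated at shifts by $-y$ and by $x+a,-y$. Applying the convolution identity~\eqref{conveq} (namely $(p-(q+\lambda))\int_0^u W_p(u-y)W_{q+\lambda}(y)\,dy = W_p(u)-W_{q+\lambda}(u)$) and then recognising shifted second-generation scale functions via~\eqref{convolution2}, I can collapse the inner integral to
$$\int_{x}^{x+z} W_p(x+z-y)\,\mathcal{W}_x^{(q,\lambda)}(y)\,dy = \frac{\mathcal{W}_x^{(q,p-q)}(x+z)-\mathcal{W}_x^{(q,\lambda)}(x+z)}{p-(q+\lambda)},$$
and likewise with $x$ replaced by $b$ for the $w(0)$ contribution. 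Substituting these back into the two-region decomposition and cleaning up produces the two terms on the right-hand side of~\eqref{LL1}, with the prefactors $\lambda/(p-(q+\lambda))$ and the ratio $\mathcal{W}_x^{(q,\lambda)}(x+a)/\mathcal{W}_b^{(q,\lambda)}(b+a)$ emerging exactly as in~\eqref{jointLap1sda}.

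\textbf{Main obstacle.} The algebra is the crux: the linear combination of $W_{q+\lambda}$-convolutions coming out of the killed $(q+\lambda)$-resolvent must be repackaged through the shifted convolution identity~\eqref{convolution2} into the differences $\mathcal{W}_{\cdot}^{(q,p-q)}(\cdot+z)-\mathcal{W}_{\cdot}^{(q,\lambda)}(\cdot+z)$ appearing in~\eqref{LL1}. The shift index on $\mathcal{W}$ (either $x$ or $b$) has to be tracked carefully in both the $-a\leq x<0$ and $0\leq x\leq b$ regimes so that, after solving for $w(0)$, the two regime-specific formulas collapse into a single expression valid on all of $[-a,b]$. An alternative and arguably cleaner route is to derive first the joint transform $\mathbb{E}_{x}[\mathrm{e}^{-qT_0^-+\theta X_{T_0^-}}\mathbf{1}_{\{T_0^-<\tau_b^+\wedge\tau_{-a}^-\}}]$ (a two-sided analogue of~\eqref{jointLap1b}) and then use \eqref{LapscriptW} to invert in $\theta$ against the kernel $e^{\theta z}/\psi_p(\theta)$; in that route the same partial-fraction/identity~\eqref{convolution2} step remains the decisive bookkeeping.
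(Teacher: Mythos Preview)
First, note that the paper does not actually prove Lemma~\ref{L:L5}: it is recorded in the Appendix as an identity ``one can extract from \cite{lkabous2018}'', so there is no in-paper argument to compare your plan against.

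Your plan has a genuine gap in the step for $0\le x\le b$. The displayed relation
\[
w(x)=\mathbb{E}_x\bigl[\mathrm{e}^{-qT_0^-}\,w(X_{T_0^-})\,\mathbf{1}_{\{T_0^-<\tau_b^+\wedge\tau_{-a}^-\}}\bigr]
\]
is false: $T_0^-$ is the \emph{terminal} time in the very definition of $w$, so applying the strong Markov property at $T_0^-$ and plugging in $w(X_{T_0^-})$ would restart the clock and compute the payoff at a \emph{second}, fresh first-negative-observation time rather than at $T_0^-$ itself. (Contrast the proof of Theorem~\ref{maintheotwosidedth}: there $g$ is defined through $\tau_b^+$, which continues to run after $T_0^-$, so $g(X_{T_0^-})$ is legitimate.) The correct splitting for $0\le x\le b$ is at the continuous first-passage time $\tau_0^-$: since no Poisson epoch before $\tau_0^-$ can observe $X$ below $0$, and the Poisson clock is memoryless, one has
\[
w(x)=\mathbb{E}_x\bigl[\mathrm{e}^{-q\tau_0^-}\,w(X_{\tau_0^-})\,\mathbf{1}_{\{\tau_0^-<\tau_b^+\}}\bigr],
\]
into which you substitute your (correct) formula $w(y)=\dfrac{W_{q+\lambda}(y+a)}{W_{q+\lambda}(a)}\,w(0)+v(y)$ for $y\in[-a,0)$. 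This produces the single linear equation for $w(0)$ you were after, but the inputs are now classical $\tau_0^-$ identities (the two-sided exit problem and the killed resolvent on $[-a,b]$), not \eqref{jointLap1b}--\eqref{jointLap1sda}; using the latter as you propose would in fact be circular, since evaluating $\mathbb{E}_0[\mathrm{e}^{-qT_0^-}W_{q+\lambda}(X_{T_0^-}+a)\mathbf{1}_{\{\cdot\}}]$ is a special case of the lemma itself. With this repair the rest of your outline---computing $v(x)$ via the $(q+\lambda)$-resolvent killed on exiting $[-a,0]$ and collapsing the convolutions through \eqref{conveq} and \eqref{convolution2}---is sound. Your ``alternative route'' via the two-sided analogue of \eqref{jointLap1b} followed by inversion against \eqref{LapscriptW} is also valid and arguably tidier.
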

\bibliographystyle{alpha}
\bibliography{REFERENCES}

\end{document}